\documentclass[12pt,a4paper,reqno]{amsart}
\usepackage{amsmath,amsfonts,amssymb,amsthm,amscd}
\usepackage{pst-node,pstricks,pst-tree}

% Alex: Colors don't work properly if I use dvipdfmx with my latex
% compiler. Feel free to re-enable the option if you need it; I'll
% find a way to work around it.
\usepackage{graphicx}
\usepackage{xcolor}

\topmargin -1cm \headsep 0.5cm \textheight 23cm \textwidth 15cm

\parskip 0.2cm
\parindent 5mm
\oddsidemargin 0.5cm \evensidemargin 0.5cm \unitlength=1cm

\input{comment.sty}
\includecomment{MM}
%\excludecomment{MM}
\includecomment{CL}
%\excludecomment{CL}

% Short-hand commands for greek alphabet

\renewcommand{\a}{\alpha}
\renewcommand{\b}{\beta}
\newcommand{\g}{\gamma}

\renewcommand{\d}{\delta}

\renewcommand{\l}{\lambda}

\newcommand{\m}{\mu}

\renewcommand{\o}{\omega}
\renewcommand{\O}{\Omega}
\renewcommand{\r}{\rho}
\newcommand{\s}{\sigma}

\renewcommand{\t}{\tau}

\newcommand{\e}{\varepsilon}
\newcommand{\f}{\varphi}
\newcommand{\F}{\Phi}

%% Calligraphic

\newcommand{\Ac}{{\mathcal A}}

\newcommand{\Gc}{{\mathcal G}}
\newcommand{\Fc}{{\mathcal F}}
\renewcommand{\Mc}{{\mathcal M}}
\newcommand{\Hc}{{\mathcal H}}
\newcommand{\Qc}{{\mathcal Q}}

\newcommand{\C}{{\mathbb C}}
\newcommand{\E}{{\mathbb E}}

\newcommand{\Xc}{{\mathcal X}}

\newcommand{\Sc}{{\mathcal S}}

\newcommand{\R}{{\mathbb R}}

\newcommand{\Z}{{\mathbb Z}}
\renewcommand{\Pr}{{\mathbb P}}

\renewcommand{\Re}{{\rm Re}}

\newcommand{\TV}{{\rm TV}}
\newcommand{\Var}{{\rm Var}}

\newcommand{\wh}[1]{\widehat{#1}}
\newcommand{\wt}[1]{\widetilde{#1}}
\newcommand{\1}{{\bf 1}}

\newcommand{\id}{{\rm id}}

\newcommand{\HS}{{\rm HS}}
\newcommand{\End}{{\rm End}}
\renewcommand{\Tr}{{\rm Tr}\,}

%%%%%%%%%%%%%%%%%%%%%%%%%%%%%%%%%%%%%%%%%%%%%%%%%%%%%%%
%%                                           %%
%% Theorem-like environments                 %%
%%                                           %%
%%%%%%%%%%%%%%%%%%%%%%%%%%%%%%%%%%%%%%%%%%%%%%%%%%%%%%%

\newtheorem{theorem}{Theorem}[section]

\newtheorem{lemma}[theorem]{Lemma}

\theoremstyle{definition}

\theoremstyle{remark}
\newtheorem{remark}[theorem]{Remark}

%% end of theorem-like environments

\newcommand{\floor}[1]{\left\lfloor #1 \right\rfloor}
\newcommand{\ceil}[1]{\left\lceil #1 \right\rceil}
\newcommand{\normTV}[1]{\left\| #1 \right\|_{\TV}}
\newcommand{\Sast}[1]{\Sc_\ast\!\left( #1 \right)}
\newcommand{\Snon}[1]{\Sc_{non}\!\left( #1 \right)}

\begin{document}

\title[Cutoff for product replacement on finite groups]{Cutoff for product replacement on finite groups}
\author{Yuval Peres, Ryokichi Tanaka, and Alex Zhai}
\address{Yuval Peres, Microsoft Research}
\email{peres@microsoft.com}
\address{Ryokichi Tanaka, Tohoku University}
\email{rtanaka@m.tohoku.ac.jp}
\address{Alex Zhai, Stanford University}
\email{azhai@stanford.edu}

\date{\today}

\begin{abstract}
  We analyze a Markov chain, known as the \emph{product replacement
    chain}, on the set of generating $n$-tuples of a fixed finite
  group $G$. We show that as $n \rightarrow \infty$, the
  total-variation mixing time of the chain has a cutoff at time
  $\frac{3}{2} n \log n$ with window of order $n$. This generalizes a
  result of Ben-Hamou and Peres (who established the result for $G =
  \Z/2$) and confirms a conjecture of Diaconis and Saloff-Coste that
  for an arbitrary but fixed finite group, the mixing time of the
  product replacement chain is $O(n \log n)$.
\end{abstract}

\maketitle

\section{Introduction}

Let $G$ be a finite group, and let $[n] := \{1, 2, \dots, n\}$. We
consider the set $G^n$ of all functions $\sigma: [n] \to G$ (or
``configurations''). We may define a Markov chain $(\s_t)_{t\ge0}$ on
$G^{n}$ as follows: if we have a current state $\s$, then uniformly at
random, choose an ordered pair $(i, j)$ of distinct integers in $[n]$,
and change the value of $\s(i)$ to $\s(i) \s(j)^{\pm 1}$, where the
signs are chosen with equal probability.

We will restrict the chain $(\s_t)_{t\ge 0}$ to the space of {\it
  generating $n$-tuples}, i.e.\ the set of $\s$ whose values generate
$G$ as a group:
\[ \Sc := \left\{ \s \in G^n \ : \ \langle \s(1), \dots, \s(n) \rangle=G \right\}. \]
It is not hard to see that for fixed $G$ and large enough $n$, the
chain on $\Sc$ is irreducible (see \cite[Lemma 3.2]{DSC96}). We will
always assume $n$ is large enough so that this irreducibility
holds. Note that the chain is also symmetric, and it is aperiodic
because it has holding on some states. Thus, the chain has a uniform
stationary distribution $\pi$ with $\pi(\s)=1/|\Sc|$.

This Markov chain was first considered in the context of computational
group theory---it models the \emph{product replacement algorithm} for
generating random elements of a finite group introduced in
\cite{Celler}. By running the chain for a long enough time $t$ and
choosing a uniformly random index $k \in [n]$, the element $\s_t(k)$
is a (nearly) uniformly random element of $G$. The product replacement
algorithm has been found to perform well in practice \cite{Celler,
  Holt-Rees}, but the question arises: how large does $t$ need to be
in order to ensure near uniformity?

One way of answering the question is to estimate the mixing time of
the Markov chain. It was shown by Diaconis and Saloff-Coste that for
any fixed finite group $G$, there exists a constant $C_G$ such that
the $\ell^2$-mixing time is at most $C_G n^2 \log n$ \cite{DSC96,
  DSC98} (see also Chung and Graham \cite{Chung-Graham-Group} for a
simpler proof of this fact with a different value for $C_G$).

In another line of work, Lubotzky and Pak \cite{Lubotzky-Pak} analyzed
the mixing of the product replacement chain in terms of Kazhdan
constants (see also subsequent quantitative estimates for Kazhdan
constants by Kassabov \cite{Kassabov}). We also mention a result of
Pak \cite{Pak} which shows mixing in $\text{polylog}(|G|)$ steps when
$n = \Theta(\log |G| \log \log |G|)$. The reader may consult the
survey \cite{Pak-Survey} for further background on the product
replacement algorithm.

Diaconis and Saloff-Coste conjectured that the mixing time bound can
be improved to $C_G n \log n$ \cite[Remark 2, Section 7,
  p.\ 290]{DSC98}, based on the observation that at least $n \log n$
steps are needed by the classical coupon-collector's problem. This was
confirmed in the case $G = \Z/2$ by Chung and Graham
\cite{Chung-Graham-Cube} and recently refined by Ben-Hamou and Peres,
who show that when $G=\Z/2$, the chain in fact exhibits a cutoff at
time $\frac{3}{2}n \log n$ in total-variation with window of order $n$
\cite{BHP}.

In this paper, we extend the result of Ben-Hamou and Peres to all
finite groups. Note that this also verifies the conjecture of Diaconis
and Saloff-Coste for a fixed finite group. To state the result, let us
denote the total variation distance between $\Pr_\s(\s_t \in \cdot
\ )$ and $\pi$ by
\[ d_\s(t) := \max_{A \subseteq \Sc}|\Pr_\s(\s_t \in A)- \pi (A)|. \]

\begin{theorem}\label{Thm:main}
Let $G$ be a finite group. Then, the Markov chain $(\s_t)_{t \ge 0}$ on the set of
generating $n$-tuples of $G$ has a total-variation cutoff at time
$\frac{3}{2}n\log n$ with window of order $n$. More precisely, we have
\begin{equation}\label{Eq:UB}
  \lim_{\b\to\infty} \limsup_{n\to\infty} \max_{\s \in \Sc}
  d_\s\left(\frac{3}{2}n\log n + \b n\right) = 0
\end{equation}
and
\begin{equation}\label{Eq:LB}
  \lim_{\b\to\infty} \liminf_{n\to\infty} \max_{\s \in \Sc}
  d_\s\left(\frac{3}{2}n\log n - \b n\right) = 1.
\end{equation}
\end{theorem}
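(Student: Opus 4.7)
I would adapt the Ben-Hamou--Peres approach for $G = \Z/2$ to arbitrary finite $G$, treating upper and lower bounds separately.

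\emph{Lower bound.} Fix $\s \in \Sc$ and set $T_- := \tfrac{3}{2} n\log n - \b n$. Write each current coordinate as $\s_t(i) = \s(i) \cdot W_i(t)$, where $W_i(t)$ is the cumulative word (with inverses) of helpers used in updates to position $i$. The distinguishing statistic would count coordinates $i$ for which $W_i(T_-)$ is ``short'' in a precise sense: namely, $W_i(T_-)$ has length at most one, and if its sole letter is $\s(j)^{\pm 1}$ then $j$ itself has not been touched up to $T_-$. By first- and second-moment estimates, this count has order $e^{c\b}$ with high probability under $\Pr_\s$, concentrated enough to distinguish from the much smaller count under $\pi$. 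The constant $\tfrac{3}{2}$ appears because the ``short word'' condition combines the $n\log n$ coupon-collector threshold (for a coordinate to be updated at all) with an additional $\tfrac{1}{2}n\log n$ threshold (for a coordinate to avoid being touched as a helper); each step chooses an ordered pair, so a position is hit at rate $2/n$ but targeted at rate only $1/n$.

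\emph{Upper bound.} For $T_+ := \tfrac{3}{2}n\log n + \b n$, I would construct a grand coupling of the chains started from arbitrary states using a common update sequence, and argue it coalesces by $T_+$ with high probability. The coupling factors conceptually into two overlapping phases. A ``touching phase'' of length about $\tfrac{1}{2}n\log n$ guarantees that every coordinate has appeared as target or helper at least once, seeding randomness into each position. A subsequent ``refresh phase'' of length about $n\log n$ guarantees that every coordinate is updated by a helper already randomized; a single such update effectively couples the targeted coordinate to its stationary counterpart. Organizing this as a backward-in-time dependence tree (whose root is the final state and whose branches trace back to the updates producing each coordinate), the coupling coalesces globally once every leaf has been covered, which happens by $T_+$ with probability tending to $1$.

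\emph{Main obstacle.} The principal hurdle is handling non-commutativity while extracting the sharp constant $\tfrac{3}{2}$. For $G = \Z/2$ the chain is linear over $\mathbb F_2$, which Ben-Hamou and Peres exploit via explicit Fourier analysis; for general $G$ one must track random words in initial letters and use representation theory to convert ``word mixedness'' into uniformity on $G$ in an $\ell^2$ sense. The combinatorics of these words (their cancellations, their growth as the chain runs, and their interaction with the coupon-collector estimates) are the most delicate part of the argument. A secondary subtlety is the restriction to generating tuples $\Sc$ rather than all of $G^n$; however $|G^n\setminus\Sc|/|G^n|\to 0$ as $n\to\infty$, so a mild conditioning argument should reduce the question to the unrestricted chain without affecting leading-order behavior.
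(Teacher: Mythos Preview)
Your upper-bound sketch has a genuine gap. The claim that a single update by a helper from the ``touching phase'' \emph{effectively couples the targeted coordinate to its stationary counterpart} is unjustified: after $\tfrac12 n\log n$ steps a typical coordinate has been touched only $O(1)$ times, and its value is nowhere near uniform on $G$, so one refresh from such a helper does not couple anything. The paper's mechanism is entirely different and does not go through a coordinate-wise coupling. It first uses a symmetry reduction (permutations of $[n]$ preserving the level sets of the initial state) to show that the total-variation distance equals that of the \emph{proportion matrix} $(n^{\s_\ast}_{a,b}(\s_t))_{a,b}$; it then Fourier-transforms each row, obtaining coefficients $y_{a,\rho}(t)$ satisfying a stochastic difference equation $y_{a,\rho}(t+1)=y_{a,\rho}(t)(I+X_\rho(t))+\wh M_{a,\rho}(t+1)$ whose drift contracts at rate $1/n$ once the chain is in a generic (non-near-subgroup) region. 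The $\tfrac32$ decomposition is $n\log n$ of burn-in (a birth--death analysis to escape configurations concentrated near a proper subgroup, so that $\|I+X_\rho\|_{op}\le 1-c/n$) plus $\tfrac12 n\log n$ for $\|y_{a,\rho}\|$ to shrink from $O(1)$ to $O(1/\sqrt n)$; an $O(n)$ coupling of proportion matrices then finishes. Note that your phase ordering ($\tfrac12 n\log n$ then $n\log n$) is reversed from the actual structure, and the $\tfrac12 n\log n$ step is a Fourier-contraction estimate, not coupon collecting.

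Your lower bound also has a problem: the statistic ``$W_i(T_-)$ is short'' depends on the trajectory, not on the state $\s_{T_-}$, so it does not define a distinguishing event $A\subset\Sc$. The paper instead fixes the specific initial state $\s_\star$ with all but $O(1)$ coordinates equal to the identity and uses $\|x_\rho(\s_T)\|_{\HS}$ (a function of the state) as the distinguishing statistic. The same $n\log n+\tfrac12 n\log n$ decomposition appears: a birth--death lower bound shows that $n\log n-\b n$ steps do not suffice for a constant fraction of sites to become non-identity, and then a supermartingale comparison shows that $\Re\Tr x_\rho$ cannot decay from $\Theta(1)$ to $O(1/\sqrt n)$ in fewer than $\tfrac12 n\log n-\b n$ further steps. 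Your identification of representation theory as the right tool is correct, but it enters through the Fourier coefficients $x_\rho,y_{a,\rho}$ of the empirical distribution, not through word lengths or dependence trees.
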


\subsection{A connection to cryptography}

We mention another motivation for studying the product replacement
chain in the case $G=(\Z/q)^m$ for a prime $q \ge 2$ and integers $m \ge
1$. It comes from a public-key authentication protocol proposed by
Sotiraki \cite{Sotiraki}, which we now briefly describe. In the
protocol, a verifier wants to check the identity of a prover based on
the time needed to answer a challenge.

First, the prover runs the Markov chain with $G = (\Z/q)^m$ and $n = m$,
which can be interpreted as performing a random walk on $SL_n(\Z/q)$,
where $\s(k)$ is viewed as the $k$-th row of a $n \times n$
matrix. (In each step, a random row is either added to or subtracted
from another random row.)

After $t$ steps, the prover records the resulting matrix $A \in
SL_n(\Z/q)$ and makes it public. To authenticate, the verifier gives
the prover a vector $x \in (\Z/q)^n$ and challenges her to compute $y :=
Ax$. The prover can perform this calculation in $O(t)$ operations by
retracing the trajectory of the random walk.

Without knowing the trajectory, if $t$ is large enough, an adversary
will not be able to distinguish $A$ from a random matrix and will be
forced to perform the usual matrix-vector multiplication (using $n^2$
operations) to complete the challenge. Thus, the question is whether
$t \ll n^2$ is large enough for the matrix $A$ to become sufficiently
random, so that the prover can answer the challenge much faster than
an adversary.

Note that when $n > m$, the product replacement chain on $G = (\Z/q)^m$
amounts to the projection of the random walk on $SL_n(\Z/q)$ onto the
first $m$ columns. Thus, Theorem \ref{Thm:main} shows that when $m$ is
fixed and $n \rightarrow \infty$, the mixing time for the first $m$
columns is around $\frac{3}{2} n \log n$. One then hopes that the
mixing of several columns is enough to make it computationally
intractable to distinguish $A$ from a random matrix; this would
justify the authentication protocol, as $n \log n \ll n^2$.

We remark that when $t$ is much larger than the mixing time of the
random walk on $SL_n(\Z/q)$ generated by row and additions and
subtractions, it is information theoretically impossible for an
adversary to distinguish $A$ from a random matrix. However, the
diameter of the corresponding Cayley graph on $SL_n(\Z/q)$ is known to
be of order $\Theta\left( \frac{n^2}{\log_q n} \right)$ \cite{AHM,
  Christofides}, so a lower bound of the same order necessarily holds
for the mixing time. Diaconis and Saloff-Coste \cite[Section 4,
  p.\ 420]{DSC96} give an upper bound of $O(n^4)$, which was
subsequently improved to $O(n^3)$ by Kassabov \cite{Kassabov}. Closing
the gap between $n^3$ and $\frac{n^2}{\log n}$ remains an open
problem.

\subsection{Outline of proof}

The proof of Theorem \ref{Thm:main} analyzes the mixing behavior in
several stages:
\begin{itemize}
\item an initial ``burn-in'' period lasting around $n \log n$ steps,
  after which the group elements appearing in the configuration are
  not mostly confined to any proper subgroup of $G$;
\item an averaging period lasting around $\frac{1}{2} n \log n$ steps,
  after which the counts of group elements become close to their
  average value under the stationary distribution; and
\item a coupling period lasting $O(n)$ steps, after which our chain
  becomes exactly coupled to the stationary distribution with high
  probability.
\end{itemize}
The argument is in the spirit of \cite{BHP}, but a more elaborate
analysis is required in the second and third stages. To analyze the
first stage, for a fixed proper subgroup $H$, the number of group
elements in $H$ appearing in the configuration is a birth-and-death
process whose transition probabilities are easy to estimate. The
analysis of the resulting chain is the same as in \cite{BHP}, and we
can then union bound over all proper subgroups $H$.

In the second stage, for a given starting configuration $\s_0 \in
\Sc$, we consider quantities $n_{a,b}(\s)$ counting the number of
sites $k$ where $\s_0(k) = a$ and $\s(k) = b$. A key observation
(which also appears in \cite{BHP}) is that by symmetry, projecting the
Markov chain onto the values $(n_{a,b}(\s_t))_{a, b \in G}$ does
not affect the mixing behavior. Thus, it is enough to understand the
mixing behavior of the counts $n_{a,b}$.

One expects these counts to evolve towards their expected value
$\E_{\s \sim \pi} n_{a,b}(\s)$ as the chain mixes. To carry out the
analysis rigorously, we write down a stochastic difference equation
for the $n_{a,b}$ and analyze it via the Fourier
transform. Intuitively, as $n \rightarrow \infty$, the process
approaches a ``hydrodynamic limit'' so that it becomes approximately
deterministic. It turns out that after about $\frac{1}{2} n \log n$
steps, the $n_{a,b}$ are likely to be within $O(\sqrt{n})$ of their
expected value. Our analysis requires a sufficiently ``generic''
initial configuration, which is why the first stage is necessary.

Finally, in the last stage, we show that if the $(n_{a,b}(\s))_{a,b\in
  G}$ and $(n_{a,b}(\s'))_{a,b\in G}$ for two configurations are
within $O(\sqrt{n})$ in $\ell^1$ distance, they can be coupled to be
exactly the same with high probability after $O(n)$ steps of the
Markov chain. A standard argument involving coupling to the stationary
distribution then implies a bound on the mixing time.

The main idea to prove the coupling bound is that even if the $\ell^1$
distance evolves like an unbiased random walk, there is a good chance
that it will hit $0$ due to random fluctuations. A similar argument is
used to prove cutoff for lazy random walk on the hypercube \cite[Chapter
  18]{LevinPeresWilmer}. However, some careful accounting is necessary
in our setting to ensure that in fact the $\ell^1$ distance does not
increase in expectation and to ensure sufficient fluctuations.

\subsection{Organization of the paper}

The rest of the paper is organized as follows. In Section
\ref{Sec:UB}, we state (without proof) the key lemmas describing the
behavior in each of the three stages and use these to prove the upper
bound \eqref{Eq:UB} in Theorem \ref{Thm:main}. Sections
\ref{Sec:DE-Proofs} and \ref{Sec:Coupling-Proof} contain the proofs of
these lemmas. Finally, in Section \ref{Sec:LB}, we prove the lower
bound \eqref{Eq:LB} in Theorem \ref{Thm:main}; this is mostly a matter
of verifying that the estimates used in the upper bound were tight.

\subsection{Notation}

Throughout this paper, we use $c, C, C', \dots$, to denote
absolute constants whose exact values may change from line to line,
and also use them with subscripts, for instance, $C_G$ to specify its
dependency only on $G$. We also use subscripts with big-$O$ notation,
e.g.\ we write $O_G(\,\cdot\,)$ when the implied constant depends only
on $G$.

\section{Proof of Theorem \ref{Thm:main} (\ref{Eq:UB})}\label{Sec:UB}

Let us fix a finite group $G$ and denote its cardinality by $\Qc :=
|G|$. For a configuration $\s \in \Sc$, let $n_a(\s)$ denote the
number of sites having group element $a$, i.e.,
\[ n_a(\s) := |\{i \in [n] \ : \ \s(i)=a\}|. \]

\subsection{The burn-in period}

For a proper subgroup $H \subseteq G$, let
\[ n_{non}^H(\s) := \sum_{a \in G \setminus H} n_a(\s) \]
denote the number of sites not in $H$, and define for $c \in (0, 1)$
the set
\[ \Snon{c} := \{\s \in \Sc \ : \ n_{non}^H(\s) \ge cn \text{ for all proper subgroups $H \subseteq G$} \}. \]
Thus, $\Snon{c}$ is the set of states $\s$ where the group elements
appearing in $\s$ are not mostly confined to any particular proper
subgroup of $G$. The next lemma shows that we reach $\Snon{1/3}$
in about $n \log n$ steps, and once we reach $\Snon{1/3}$, we
remain in $\Snon{1/6}$ for $n^2$ steps with high probability. Note
that $n^2$ is much larger than the overall mixing time, so we may
essentially assume that we are in $\Snon{1/6}$ for all of the
later stages.

\begin{lemma}\label{Lem:Burn}
  Let $\t_{1/3} := \min\{t \ge 0 : \s_t \in \Snon{1/3}\}$ be the
  first time to hit $\Snon{1/3}$. Then for all large enough $n$
  and for all large enough $\b > 0$,
  \[ \max_{\s \in \Sc} \Pr_\s(\t_{1/3} > n \log n + \b n) \le \frac{120 \Qc}{\b^2}. \]
  Moreover, there exists a constant $C_G$ depending only on $G$ such that
  \[ \max_{\s \in \Snon{1/3}}\Pr_\s \left(\s_t \notin \Snon{1/6} \ \text{\rm for some $t \le n^2$}\right) \le C_G n^2e^{-n/10}. \]
\end{lemma}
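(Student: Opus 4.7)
The plan is to fix a proper subgroup $H \subseteq G$, study the one-dimensional process $X_t^H := n_{non}^H(\s_t)$ in isolation, and then union bound over the finitely many (depending only on $G$) proper subgroups. First I would compute the one-step transition probabilities in terms of $k := X_t^H$. Increases correspond exactly to picking $i$ with $\s(i) \in H$ and $j$ with $\s(j) \notin H$, giving
\[
\Pr\bigl(X_{t+1}^H = X_t^H + 1 \bigm| \s_t = \s\bigr) = \frac{k(n-k)}{n(n-1)}.
\]
A decrease requires $\s(i), \s(j) \notin H$ with $\s(j)$ in one of two specific non-$H$ cosets (namely $\s(i)^{-1}H$ or $H\s(i)$, according to the sign), and since at most $k$ sites lie in any single non-$H$ coset, $\Pr(X_{t+1}^H = X_t^H - 1 \mid \s_t) \le k^2/(n(n-1))$. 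Combining gives the drift bound $\E[X_{t+1}^H - X_t^H \mid \s_t] \ge k(n-2k)/(n(n-1))$, which is positive for $k < n/2$ and behaves like $k/n$ when $k$ is small.

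For the first assertion, since $\s \in \Sc$ is a generating tuple we have $X_0^H \ge 1$. I would stochastically dominate $(X_t^H)$ from below by the birth-death chain $(Y_t)$ with up/down probabilities $q_+(k) = k(n-k)/(n(n-1))$ and $q_-(k) = k^2/(n(n-1))$. By the strong Markov property the first-passage times $T_{k \to k+1}$ for $(Y_t)$ are independent; using the recursion $E_k = 1/q_+(k) + (q_-(k)/q_+(k)) E_{k-1}$ and the partial-fraction identity $\frac{1}{k(n-2k)} = \frac{1}{nk} + \frac{2}{n(n-2k)}$, one finds $\E[T_{k\to k+1}] = n/k + O(1)$ and $\Var[T_{k \to k+1}] = O((n/k)^2)$ uniformly for $1 \le k \le n/3$. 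Summing yields $\E[\t_{n/3}^H] = n\log n + O(n)$ and $\Var[\t_{n/3}^H] = O(n^2)$, so Chebyshev gives $\Pr(\t_{n/3}^H > n\log n + \b n) \le C/\b^2$. Union-bounding over proper subgroups of $G$ yields the $120\Qc/\b^2$ bound.

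For the second assertion, I would use an exponential Lyapunov function. For $k \le n/3$ the ratio $p_+(\s)/p_-(\s) \ge (n-k)/k \ge 2$, so setting $V(x) := e^{-\a x}$ with a constant $\a \in (0, \log 2)$, one checks that $\E[V(X_{t+1}^H)/V(X_t^H) \mid X_t^H = k] = 1 + p_+(e^{-\a}-1) + p_-(e^\a - 1) \le 1$ whenever $k \le n/3$, so $V(X_t^H)$ is a supermartingale up to the first exit from $[0, n/3]$. Applying optional stopping to a single excursion starting at $X = n/3$, the probability of reaching $n/6$ before returning above $n/3$ is at most $V(n/6)/V(n/3) = e^{-\a n/6}$. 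Since the chain can undergo at most $n^2$ such excursions in $n^2$ steps, the strong Markov property and a union bound (together with a further union bound over proper subgroups) yield $C_G n^2 e^{-\a n/6}$, and $\a = 3/5 < \log 2$ produces the stated $C_G n^2 e^{-n/10}$.

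The main obstacle will be extracting the sharp leading constant $1$ in $n\log n$ rather than a larger constant: the crude lower bound $k(n-2k)/(n(n-1)) \ge k/(3(n-1))$, valid on $k \le n/3$, only yields an expected hitting time of order $3n\log n$. One has to use instead that $n - 2k = n(1 + o(1))$ throughout the regime $k \ll n$ where nearly all of the hitting time accumulates, and a correspondingly tight variance estimate is essential so that the Chebyshev inflation appears only as an additive $O(n)$ correction rather than distorting the leading $n\log n$ term.
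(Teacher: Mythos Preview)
Your approach to the first assertion is essentially the paper's: fix a proper subgroup $H$, observe that $(n_{non}^H(\sigma_t))$ stochastically dominates an explicit birth-death chain, bound the mean and variance of its first-passage time to $n/3$, apply Chebyshev, and union-bound over $H$. The paper takes down-rate $k(k-1)/(n(n-1))$ (the crude worst case ``whenever $\sigma(i),\sigma(j)\notin H$ the product lands in $H$'') where you use the slightly looser $k^2/(n(n-1))$ via your coset count; both suffice, and the paper simply cites \cite{BHP} for the moment estimates $\E_1 T_{n/3}\le n\log n+n$ and $\Var_1 T_{n/3}\le 110n^2$ that you sketch directly. Your closing remark about needing the sharp drift $k/n$ (not $k/(3n)$) in the regime $k\ll n$ to get the correct leading constant is exactly the content of those estimates.

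For the second assertion your route differs. The paper continues with the dominated birth-death chain and invokes from \cite{BHP} the reversibility bound
\[
\Pr_k(T_m\le n^2)\;\le\; n^2\,\frac{\pi_{\rm BD}(m)}{\pi_{\rm BD}(k)},\qquad \pi_{\rm BD}(k)=\binom{n}{k}/(2^n-1),
\]
and the binomial ratio $\binom{n}{n/6}/\binom{n}{n/3}$ furnishes the factor $e^{-n/10}$. Your exponential-Lyapunov/optional-stopping argument applied directly to $X_t^H$ is a legitimate and more self-contained alternative that reaches the same $n^2 e^{-n/10}$ bound without computing a stationary measure; what it buys is avoiding any appeal to \cite{BHP}, at the cost of the extra excursion bookkeeping. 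One small slip: the gambler's-ruin ratio should read $V(n/3)/V(n/6)=e^{-\alpha n/6}$, not $V(n/6)/V(n/3)$, though your final expression is correct.
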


\begin{proof}
  Fix a proper subgroup $H \subset G$, and consider what happens to
  $n_{non}^H(\s_t)$ at time $t$. Suppose our next step is to replace
  $\s(i)$ with $\s(i)\s(j)$.

  If $\s(j) \in H$, then $n_{non}^H(\s_{t+1}) = n_{non}^H(\s_t)$. If
  $\s(j) \not\in H$ and $\s(i) \in H$, then $n_{non}^H(\s_{t+1}) =
  n_{non}^H(\s_t) - 1$. Finally, if $\s(j), \s(i) \not\in H$, then
  $\s(i)\s(j)$ may or may not be in $H$, so $n_{non}^H(\s_{t+1}) \ge
  n_{non}^H(\s_t) - 1$.

  Let $(N_t)_{t \ge 0}$ be the birth-and-death chain with the
  following transition probabilities for $1 \le k \le n$:
  \begin{align*}
    \Pr(N_{t+1} = k+1 \mid N_t = k) &= \frac{k(n-k)}{n(n-1)} \\
    \Pr(N_{t+1} = k-1 \mid N_t = k) &= \frac{k(k-1)}{n(n-1)} \\
    \Pr(N_{t+1} = k \mid N_t = k) &= \frac{n-k}{n}.
  \end{align*}
  We start this chain at $N_0 = n^H_{non}(\s_0)$; note that because
  the elements appearing in $\s_0$ generate $G$, we are guaranteed
  to have $n^H_{non}(\s_0) > 0$.

  The above birth-and-death chain corresponds to the behavior of
  $(n^H_{non}(\s_t))$ if whenever $\s(j), \s(i) \not\in H$, it always
  happened that $\s(i)\s(j) \in H$. Thus, $(n^H_{non}(\s_t))$
  stochastically dominates $(N_t)$.

  The chain $(N_t)$ is precisely what is analyzed in \cite{BHP} for
  the case $G = \Z/2$. Let
  \[T_k := \min\{t \ge 0 : N_t=k\}.\]
  Then, we have $\E_{k-1}T_k \le \frac{n^2}{k(n-2k)}$ \cite[(2) in the
    proof of Lemma 1]{BHP} and thus $\E_1 (T_{n/3})
  =\sum_{k=2}^{n/3}\E_{k-1}T_k \le n \log n + n$.  On the other hand,
  setting $v_k=\Var_{k-1}(T_k)$, we have $v_2 \le n^2$,
  \[v_{k+1}\le \frac{k}{n-k}v_k + \frac{54 n^2}{k^2},\]
  and $\Var_1 (T_{n/3}) = \sum_{k=2}^{n/3}v_k \le 110 n^2$ \cite[The proof of Lemma 1]{BHP}.
  Hence by Chebyshev's inequality for
  all large enough $\b > 0$,
  \[\Pr_1(T_{n/3} > n \log n + \b n) \le \frac{120}{\b^2}.\]
  Moreover, we have $\Pr_{n/3} \left( T_{n/6} \le n^2 \right) \le n^2e^{-n/10}$.
  Indeed, this follows from the fact that for $m<k$, we have
  \[\Pr_k(T_m \le n^2) \le n^2\frac{\pi_{\rm BD}(m)}{\pi_{\rm BD}(k)},\]
  where $\pi_{\rm BD}(k)={n \choose k}/(2^n-1)$ \cite[(5) and the
    following in the proof of Proposition 2]{BHP}.

    We now take a union bound over all the
  proper subgroups $H$.
\end{proof}

\subsection{The averaging period}

In the next stage, the counts $n_a(\s_t)$ go toward their average
value. We actually analyze this stage in two substages, looking at a
``proportion vector'' and ``proportion matrix'', as described below.

\subsubsection{Proportion vector chain}

For a configuration $\s \in \Sc$, we consider the $\Qc$-dimensional
vector $(n_a(\s)/n)_{a \in G}$, which we call the {\it proportion
  vector} of $\s$. One may check that for a typical $\s \in \Sc$, each
$n_a(\s)/n$ is about $1/\Qc$. For each $\d > 0$, we define the
$\d$-{\it typical set}
\[ \Sc_\ast(\d) := \left\{\s \in \Sc \ : \ \left\|\left(\frac{n_a(\s)}{n}\right)_{a \in G} - \left(\frac{1}{\Qc}\right)_{a \in G}\right\| \le \d \right\}, \]
where $\| \cdot \|$ denotes the $\ell^2$-norm in $\R^G$.

The following lemma implies that starting from $\s \in \Snon{1/3}$, we
reach $\Sast{\d}$ in $O_\d(n)$ steps with high
probability. The proof is given in Section \ref{Subsec:vector}.

\begin{lemma}\label{Lem:1stDE}
  Consider any $\s \in \Snon{1/3}$ and any constant $\d>0$. There exists a constant $C_{G, \d}$
  depending only on $G$ and $\d$ such that for any $T \ge C_{G, \d} n$, we have
  \[ \Pr_\s\left(\s_T \notin \Sast{\d} \right) \le \frac{1}{n} \]
  for all large enough $n$.
\end{lemma}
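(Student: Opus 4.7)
The plan is to analyze the evolution of the \emph{proportion vector} $p_t := (n_a(\s_t)/n)_{a \in G}$ via the (non-abelian) Fourier transform on $G$. A first observation is that $(p_t)_{t \ge 0}$ is itself a Markov chain: given the counts, the distribution of the pair $(\s_t(i),\s_t(j))$ for a uniformly chosen ordered pair $(i,j)$ depends only on $(p_t(a))_{a \in G}$, and the update $\s(i) \mapsto \s(i)\s(j)^{\pm 1}$ depends only on those two entries. A direct count yields, for each $a \in G$,
\[ \E\big[p_{t+1}(a) - p_t(a) \mid \s_t\big] = \frac{1}{2n}\big[(p_t \ast p_t)(a) + (p_t \ast \check p_t)(a)\big] - \frac{p_t(a)}{n} + O_G(n^{-2}), \]
where $\check p(a) := p(a^{-1})$. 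For each irreducible unitary representation $\r : G \to U(d_\r)$, set $\wh p_t(\r) := \sum_a p_t(a)\r(a)$. Using the identities $\widehat{f\ast g}(\r)=\wh f(\r)\wh g(\r)$ and $\wh{\check p}(\r)=\wh p(\r)^\ast$ (for unitary $\r$), the drift transforms to
\[ \E\big[\wh p_{t+1}(\r) - \wh p_t(\r) \mid \s_t\big] = \frac{1}{n}\,\wh p_t(\r)\big[\Re\wh p_t(\r) - I\big] + O_G(n^{-2}), \]
where we write $\Re A := (A+A^\ast)/2$ for the Hermitian part of a matrix $A$.

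For the trivial representation this drift vanishes (since $\wh p_t = 1$); for a nontrivial $\r$, the critical step is a contraction: there exists $\eta_\r = \eta_\r(G) > 0$ such that $\Re\wh p_t(\r) \preceq (1-\eta_\r) I$ whenever $\s_t \in \Snon{1/6}$. To prove this, suppose a unit vector $v$ satisfies $v^\ast\Re\wh p_t(\r) v = 1$; since $\Re\langle\r(a)v,v\rangle \le 1$ with equality iff $\r(a)v=v$, this would force $\r(a)v=v$ for every $a$ in the support of $p_t$. The stabilizer $H_v := \{a \in G : \r(a)v=v\}$ is then a \emph{proper} subgroup of $G$ (otherwise $v$ spans a $G$-invariant line, contradicting irreducibility of the nontrivial $\r$) containing the support of $p_t$, violating $n_{non}^{H_v}(\s_t) \ge n/6$. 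A compactness argument over unit $v$ and over the compact set of distributions $p$ with $\sum_{a \notin H} p(a) \ge 1/6$ for every proper $H$ then upgrades this to the uniform bound $\eta_\r > 0$.

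Setting $M_t := \|\wh p_t(\r)\|_{\HS}^2$, the expansion $\Delta M_t = 2\Re\Tr(\wh p_t^\ast \Delta\wh p_t) + \|\Delta\wh p_t\|_{\HS}^2$, the trace bound $\Tr(AB) \le -\eta_\r \Tr(A)$ for $A \succeq 0$ and $B \preceq -\eta_\r I$, and the a priori estimate $\|\Delta\wh p_t\|_{\HS} = O_G(1/n)$ combine to give, conditional on $\s_t \in \Snon{1/6}$,
\[ \E\big[M_{t+1} - M_t \mid \s_t\big] \le -\frac{2\eta_\r}{n}M_t + \frac{C_G}{n^2}. \]
Iterating with $M_0 \le d_\r$ yields $\E M_T \le e^{-2\eta_\r T/n} d_\r + C_G/(\eta_\r n)$, which is less than $\d^2/(4\Qc^2)$ for $T=C_{G,\d}\, n$ and $n$ large. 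A fourth-moment concentration bound, obtained by iterating the analogous estimate for $\E M_t^4$ and exploiting $|\Delta M_t| \le C_G/n$, gives $\E M_T^4 = O_{G,\d}(1/n^2)$; Markov's inequality then produces $\Pr(M_T > \d^2/\Qc^2) \le O_{G,\d}(1/n^2) \le 1/(2 n \Qc)$ for $n$ large. A union bound over the finitely many nontrivial $\r$, together with Plancherel's identity $\|p_T - u\|_2^2 = \Qc^{-1} \sum_{\r \ne 1} d_\r M_T(\r)$ (for $u \equiv 1/\Qc$), gives $\s_T \in \Sast{\d}$ with probability at least $1 - 1/n - \Pr(\exists t \le T : \s_t \notin \Snon{1/6})$; the last term is $O_G(n^2 e^{-n/10})$ by Lemma \ref{Lem:Burn}, hence negligible.

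The main technical obstacle will be the uniform contraction $\Re\wh p(\r) \preceq (1-\eta_\r) I$: the compactness argument gives only existence of $\eta_\r$, and one must verify that the infimum over all admissible $p$ (arising from $\s \in \Snon{1/6}$) is strictly positive and depends only on $G$. A secondary technical point is the concentration step, where Chebyshev alone yields only an $O_{G,\d}(1/n)$ tail; pushing this below $1/n$ requires either the fourth-moment analysis above or a decomposition of $M_t$ into a supermartingale drift plus a martingale of $O(1/n)$-bounded increments followed by Azuma--Hoeffding.
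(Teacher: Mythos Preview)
Your proposal is correct and follows essentially the same route as the paper: pass to the Fourier coefficients $\wh p_t(\r)$ of the proportion vector, establish the uniform contraction $\Re\wh p_t(\r)\preceq (1-\eta_\r)I$ on $\Snon{1/6}$ via the stabilizer/proper-subgroup argument plus compactness (this is exactly Lemma~\ref{Lem:gap}), derive exponential decay of $\|\wh p_t(\r)\|_{\HS}^2$, and finish with a concentration estimate and Plancherel. The only cosmetic difference is that the paper packages the concentration step as an Azuma--Hoeffding supermartingale bound (Lemma~\ref{Lem:GenDE}) rather than a fourth-moment iteration, but you already flag that alternative yourself.
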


\subsubsection{Proportion matrix chain}

We actually need a more precise averaging than what is provided by
Lemma \ref{Lem:1stDE}. Fix a configuration $\s_0 \in \Sc$. For any $\s
\in \Sc$ and for any $a, b \in G$, define
\[ n_{a,b}^{\s_0}(\s) := |\{i \in [n] \ : \ \s_0(i)=a, \s(i)=b \}|. \]
If we run the Markov chain $(\s_t)_{t\ge 0}$ with initial state
$\s_0$, then $n_{a,b}^{\s_0}(\s_t)$ is the number of sites that
originally contained the element $a$ (at time 0) but now contain $b$
(at time $t$). Note that
\[ \sum_{b \in G} n_{a,b}^{\s_0}(\s) = n_a(\s_0) \quad \text{and} \quad \sum_{a \in G} n_{a,b}^{\s_0}(\s) = n_b(\s).\]

We can then associate with $(\s_t)_{t \ge 0}$ another Markov chain
$\left(n_{a,b}^{\s_0}(\s_t)/n_a(\s_0)\right)_{a, b \in G}$ for $t \ge
0$, which we call the {\it proportion matrix chain} ({\it with respect to
  $\s_0$}). The state space for the proportion matrix chain is $\{0,
1, \dots, n\}^{G \times G}$, and the transition probabilities depend
on $\s_0$.

The proportion matrix acts like a ``sufficient statistic'' for
analyzing our Markov chain started at $\s_\ast$, because of the
permutation invariance of our dynamics. In fact, as the following
lemma shows, the distance to stationarity of the proportion matrix
chain is equal to the distance to stationarity of the original chain.

\begin{lemma}\label{Lem:nij}
  Let $\s_\ast \in \Sc$ be a configuration. For the Markov chain
  $(\s_t)_{t \ge 0}$ with initial state $\s_\ast$, we consider
  $\left(n_{a, b}^{\s_\ast}(\s_t)\right)_{a, b \in G}$. Let
  $\overline{\pi}^{\s_\ast}$ be the stationary measure for the Markov
  chain $\{(n_{a, b}^{\s_\ast}(\s_t))_{a, b \in G}\}_{t \ge 0}$ on
  $\left\{0, 1, \dots, n\right\}^{G \times G}$. Then, for every $t \ge
  0$, we have
  \[ \normTV{ \Pr_{\s_\ast}(\s_t \in \cdot \ ) - \pi } = \normTV{ \Pr_{\s_\ast}\left( (n_{a, b}^{\s_\ast}(\s_t))_{a, b \in G} \in \cdot \ \right) - \overline{\pi}^{\s_\ast} }. \]
\end{lemma}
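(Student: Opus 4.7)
\emph{Proof plan.} The plan is to recognize this as a lumping argument driven by the $S_n$-symmetry of the dynamics, and then collapse the total-variation sum along fibers of the proportion-matrix map $\phi : \Sc \to \{0,1,\dots,n\}^{G \times G}$ defined by $\phi(\s) := (n_{a,b}^{\s_\ast}(\s))_{a,b \in G}$. Both $\Pr_{\s_\ast}(\s_t = \cdot\,)$ and $\pi$ will turn out to be constant on the fibers of $\phi$, and once this is established a standard calculation gives the equality.

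The first step is to check permutation invariance: for any $\rho \in S_n$ the transition rule is unchanged under relabeling sites by $\rho$, so $\Pr_{\s}(\s_t = \tau) = \Pr_{\s \circ \rho}(\s_t = \tau \circ \rho)$ for every $\s, \tau \in \Sc$ and every $t \ge 0$. Next, I would observe that $\phi(\tau) = \phi(\tau')$ if and only if for each $a \in G$ the multisets $\{\tau(i) : \s_\ast(i) = a\}$ and $\{\tau'(i) : \s_\ast(i) = a\}$ coincide, which is in turn equivalent to the existence of a $\rho \in S_n$ with $\s_\ast \circ \rho = \s_\ast$ and $\tau' = \tau \circ \rho$ (assemble $\rho$ from any bijection on each level set of $\s_\ast$). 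Combined with permutation invariance this shows $\Pr_{\s_\ast}(\s_t = \tau) = \Pr_{\s_\ast}(\s_t = \tau')$ whenever $\phi(\tau) = \phi(\tau')$; meanwhile $\pi(\tau) = 1/|\Sc|$ is already constant on $\Sc$.

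The same symmetry yields Dynkin's lumpability criterion: for any two $\tau, \tau'$ in the same $\phi$-fiber the one-step probability of landing in any prescribed fiber is identical, so $(\phi(\s_t))_{t \ge 0}$ is indeed a Markov chain, and because $\pi$ is uniform its unique stationary distribution is the pushforward $\overline{\pi}^{\s_\ast}(m) = |\phi^{-1}(m)|/|\Sc|$. Writing $\overline{\mu}_t(m) := \Pr_{\s_\ast}(\phi(\s_t) = m)$, fiber-constancy gives $\Pr_{\s_\ast}(\s_t = \tau) = \overline{\mu}_t(\phi(\tau))/|\phi^{-1}(\phi(\tau))|$ and $\pi(\tau) = \overline{\pi}^{\s_\ast}(\phi(\tau))/|\phi^{-1}(\phi(\tau))|$, so grouping the $\ell^1$ sum by fiber yields
\[ \normTV{\Pr_{\s_\ast}(\s_t \in \cdot\,) - \pi} = \frac{1}{2}\sum_m |\phi^{-1}(m)| \cdot \frac{|\overline{\mu}_t(m) - \overline{\pi}^{\s_\ast}(m)|}{|\phi^{-1}(m)|} = \normTV{\overline{\mu}_t - \overline{\pi}^{\s_\ast}}, \]
which is exactly the asserted identity.

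There is no essential obstacle here: this is the standard fact that projection by $\phi$ preserves total variation whenever both measures are piecewise constant on fibers of $\phi$. The only bookkeeping point is identifying $\overline{\pi}^{\s_\ast}$ with $\phi_\ast \pi$, which falls out of the uniformity of $\pi$ together with the fact that the projected process is genuinely Markov.
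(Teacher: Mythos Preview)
Your proposal is correct and follows essentially the same approach as the paper: both arguments exploit the invariance of the dynamics under permutations of $[n]$ that fix the level sets of $\s_\ast$ to conclude that $\Pr_{\s_\ast}(\s_t=\cdot)$ and $\pi$ are constant on each fiber $\phi^{-1}(N)$, and then collapse the $\ell^1$ sum fiber by fiber. Your write-up is somewhat more explicit than the paper's (you spell out the construction of the identifying permutation and verify Dynkin's lumpability criterion), but the underlying argument is the same.
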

\begin{proof}
  For any matrix $N = (N_{a,b})_{a,b \in G} \in \{0, 1, \ldots ,
  n\}^{G \times G}$, write
  \[ \Xc_{(N)} := \left\{\s \in \Sc \ : \ n_{a, b}^{\s_\ast}(\s)=N_{a, b} \ \text{for all $a, b \in G$}\right\} \]
  for the set of configurations with $N$ as their proportion matrix.

  Since the distribution of $\s_t$ is invariant under permutations on
  sites $i \in [n]$ preserving the set $\{ i : \s_\ast(i) = a\}$ for
  every $a \in G$, the conditional probability measures
  $\Pr_{\s_\ast}\left(\s_t \in \cdot \mid \s_t \in \Xc_{(N)} \right)$ and
  $\pi( \ \cdot \mid \Xc_{(N)})$ are both uniform on $\Xc_{(N)}$.
  This implies that for each $\s \in \Xc_{(N)}$,
  \[ |\Pr_{\s_\ast}(\s_t =\s) - \pi(\s)| = \frac{1}{\left|\Xc_{(N)}\right|}\left|\Pr_{\s_\ast}\left((n^{\s_\ast}_{a, b}(\s_t))_{a, b \in G}=N\right)- \overline{\pi}^{\s_\ast}(N) \right|, \]
  and summing over all $\s \in \Xc_{(N)}$ and all $N$, we obtain the
  claim.
\end{proof}

For $\s_0 \in \Sc$ and $r > 0$, define the set of configurations
\[ \Sast{\s_0, r} := \left\{ \s \in \Sc \ : \ \left\| \left(\frac{n^{\s_0}_{a, b}(\s)}{n_a(\s_0)}\right)_{b \in G} - \left(\frac{1}{\Qc}\right)_{b \in G}\right\| \le r \text{ for all $a
  \in G$} \right\}. \]
Roughly speaking, the following lemma shows that starting from a
typical configuration $\s_\ast \in \Sast{\frac{1}{4\Qc}}$, we need
about $\frac{1}{2}n \log n$ steps to reach $\Sast{\s_\ast,
  \frac{R}{\sqrt{n}}}$, where $R$ is a constant. We show this fact in
a slightly more general form where the initial state need not be
$\s_\ast$; the proof is given in Section \ref{Subsec:matrix}.

\begin{lemma}\label{Lem:2ndDE}
  Consider any $\s_\ast, \s'_\ast \in \Sast{\frac{1}{4\Qc}}$, and let
  $T := \ceil{\frac{1}{2} n \log n}$. There exists a constant $C_G >
  0$ depending only on $G$ such that for any given $R > 0$, we have
  \[ \Pr_{\s'_\ast}\left(\s_T \notin \Sast{\s_\ast, \frac{R}{\sqrt{n}}}\right) \le C_G e^{-R} + \frac{1}{n} \]
  for all large enough $n$.
\end{lemma}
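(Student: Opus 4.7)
For each $a\in G$ I would work with the conditional proportion vector $q^{(a)}(t):=(n^{\s_\ast}_{a,b}(\s_t)/n_a(\s_\ast))_{b\in G}$ and its deviation $f^{(a)}_t:=q^{(a)}(t)-u$ from the uniform vector $u=(1/\Qc)_{b\in G}$. A direct one-step calculation, using that $(i,j)$ is uniform over ordered distinct pairs and the $\pm 1$ sign is uniform, gives
\[
  \E\bigl[q^{(a)}(t+1)-q^{(a)}(t)\mid \s_t\bigr]
  = \tfrac{1}{n}\bigl(q^{(a)}(t)\ast \tilde{p}_t - q^{(a)}(t)\bigr)+O(1/n^2),
\]
where $\tilde{p}_t(c):=(n_c(\s_t)+n_{c^{-1}}(\s_t))/(2n)$ is the symmetrized proportion vector and $\ast$ denotes group convolution. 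Since $u\ast \tilde{p}_t=u$, this rewrites as a linear stochastic recursion $f^{(a)}_{t+1}=M_t f^{(a)}_t+\xi^{(a)}_t+O(1/n^2)$, with $M_t:=I+\tfrac{1}{n}(T_{\tilde{p}_t}-I)$ (where $T_{\tilde{p}_t}$ is right convolution by $\tilde{p}_t$) and $\xi^{(a)}_t$ a mean-zero martingale difference of coordinatewise magnitude $O(1/n)$ and conditional variance $O(1/n^2)$, since one update changes at most one entry of $(n^{\s_\ast}_{a,b})$ by $\pm 1$.

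Passing to the non-commutative Fourier transform on $G$ converts right convolution by $\tilde p_t$ on each irreducible representation $\pi$ into right multiplication by $\widehat{\tilde p}_t(\pi)$, which is self-adjoint by the symmetry of $\tilde p_t$ and has operator norm at most $\e_t:=\|\tilde p_t-u\|_1\le\sqrt{\Qc}\,\|p(\s_t)-u\|_2$ on non-trivial $\pi$. Hence on the mean-zero subspace $\|M_t\|_{\mathrm{op}}\le 1-(1-\e_t)/n$. Even if $\s_t$ stays in $\Sast{1/(4\Qc)}$ throughout, the naive bound $\e_t\le 1/(4\sqrt{\Qc})$ would cost a factor of $n^{1/(8\sqrt{\Qc})}$ in the contraction and does not give the sharp $1/\sqrt n$ rate. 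To fix this I would run a bootstrap on the one-dimensional quantity $p(\s_t)-u$, showing by the same mean-plus-martingale analysis (applied in one dimension) that $\|p(\s_t)-u\|_2=O(1/\sqrt n)$ for every $t\in[\t_1,T]$ with some $\t_1=O_G(n)$, off an event of probability at most $1/(2n)$. Combining the two regimes yields $\sum_{t<T}\e_t/n=O_G(1)$, and therefore $\|\prod_{t<T}M_t\cdot f^{(a)}_0\|_2\le C_G\,e^{-T/n}\,\|f^{(a)}_0\|_2\le C_G/\sqrt n$ for $T=\ceil{\tfrac{1}{2}n\log n}$.

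For the stochastic part, iterating the recursion gives $f^{(a)}_T=(\prod_{t<T}M_t)f^{(a)}_0+\sum_{s<T}(\prod_{s<t<T}M_t)\xi^{(a)}_s+O(\log n/n)$, and the same bootstrap produces a geometric variance sum $\sum_s(1-1/n)^{2(T-s)}\cdot O(1/n^2)=O(1/n)$, so that $\E\|f^{(a)}_T\|_2^2=O_G(1/n)$. To upgrade this to the exponential tail $C_G e^{-R}$ required by the lemma, I would apply an Azuma or Burkholder--Davis--Gundy inequality to each of the $|G|$ scalar coordinates of $f^{(a)}_T$ in a fixed Fourier basis, using that coordinatewise increments are bounded by $O(1/n)$, and union bound over them and over $a\in G$ together with the $O(1/n)$ bootstrap-failure event. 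The main obstacle is precisely this bootstrap coupling to the marginal chain: a single-shot spectral estimate gives only $n^{-1/2+O(1/\sqrt{\Qc})}$ decay, so the vector analysis must be run in parallel with the matrix analysis to close the loop. Non-commutativity of Fourier is a secondary difficulty, handled by the self-adjointness of $\widehat{\tilde p}_t(\pi)$, which reduces $\|M_t\|_{\mathrm{op}}$ to a scalar top-eigenvalue estimate on each irreducible block.
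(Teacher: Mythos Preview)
Your overall strategy matches the paper's: both pass to the Fourier transform of the rows $(n^{\s_\ast}_{a,b}/n_a(\s_\ast))_b$, write the resulting stochastic linear recursion driven by right multiplication by a Hermitian operator built from the proportion vector, and then run a two-level analysis --- first control the proportion vector, then feed that into the contraction rate for the matrix. You correctly identify that a one-shot bound on $\|M_t\|_{op}$ from membership in $\Sast{1/(4\Qc)}$ loses a power of $n$, and that the remedy is to feed the decay of $\|p(\s_t)-u\|$ back into the operator-norm estimate so that $\sum_{t<T}\e_t/n=O_G(1)$.

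Two technical points need attention. First, your quantitative bootstrap claim is overstated: you cannot get $\|p(\s_t)-u\|_2=O(1/\sqrt n)$ after only $\t_1=O_G(n)$ steps, since the contraction rate is $O(1/n)$ and you start at a fixed positive constant. What the paper proves (Lemma~\ref{Lem:1stDE-fourier}) is $\|x_\r(t)\|_{\HS}\le n^{-1/8}+e^{-c_G t/n}\|x_\r(0)\|_{\HS}$, which is weaker pointwise but still integrates to $\int_0^T \e_s\,ds/n=O_G(1)$ and hence gives the same $e^{-T/n}$ product bound you want. Second, your coordinate-wise Azuma on the iterated expansion $f^{(a)}_T=\sum_s(\prod_{t>s}M_t)\xi^{(a)}_s+\cdots$ runs into an adaptedness problem: the products $\prod_{t>s}M_t$ depend on the future of $\xi^{(a)}_s$, so the summands are not martingale increments in the natural filtration, and if instead you Doob-decompose the forward process $[f^{(a)}_t]_j$, the accumulated drift has size $O(\log n)$ and swamps the $O(1/\sqrt n)$ signal. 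The paper avoids this by working forward with the scalar process $z_t=\theta_n(y_{a,\r}(t))$, where $\theta_n(x)=\|x\|+n^{-1/2}(e^{-\sqrt n\|x\|}-1)$ is a smoothing of the norm. The point of $\theta_n$ is that $\|\nabla\theta_n\|\le 1$ (so one-step increments are uniformly $O(1/n)$) while $\nabla^2\theta_n\preceq\sqrt n\,I$ (so the quadratic Taylor remainder from the mean-zero noise is $O(n^{-3/2})$, exactly the scale required in Lemma~\ref{Lem:GenDE}). This converts the recursion into a genuine adapted supermartingale inequality $z_{t+1}\le(1-\f(t)/n)z_t+M'(t+1)$ with bounded increments, to which Azuma applies without adaptedness issues.
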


\subsection{The coupling period}

After reaching $\Sast{\s_\ast, \frac{R}{\sqrt{n}}}$, we show that only
$O(n)$ additional steps are needed to mix in total variation
distance. The main ingredient in the proof is a coupling of proportion
matrix chains so that they coalesce in $O(n)$ steps when they both
start from configurations $\s, \tilde\s \in \Sast{\s_\ast,
  \frac{R}{\sqrt{n}}}$. We construct such a coupling and prove the
following lemma in Section \ref{Sec:Coupling-Proof}.

\begin{lemma}\label{Lem:RW}
  Consider any $\s_\ast \in \Sast{\frac{1}{5\Qc^3}}$, and let $R > 0$. Suppose $\s, \tilde
  \s \in \Sast{\s_\ast, \frac{R}{\sqrt{n}}}$. Then, there exists a
  coupling $(\s_t, \tilde \s_t)$ of the Markov chains with initial
  states $(\s, \tilde \s)$ such that for a given $\b > 0$ and all
  large enough $n$,
  \[ \Pr_{\s, \tilde \s}(\t > \b n) \le \frac{32\Qc^2 R}{\sqrt{\b}}, \]
  where $\t:=\min\{t \ge 0 : n^{\s_\ast}_{a, b}(\s_t) =
  n^{\s_\ast}_{a, b}(\tilde \s_t) \ \text{\rm for all $a, b \in G$}\}$.
\end{lemma}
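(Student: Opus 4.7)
The plan is to build a coupling $(\s_t, \tilde\s_t)$ of the two chains under which the $\ell^1$ distance of the proportion matrices,
\[
D_t := \sum_{a, b \in G} \bigl|\,n^{\s_\ast}_{a,b}(\s_t) - n^{\s_\ast}_{a,b}(\tilde\s_t)\,\bigr|,
\]
is dominated by a nonnegative integer-valued supermartingale that behaves like an unbiased random walk absorbed at the origin; a reflection-principle style estimate then bounds the hitting time $\t$ in terms of the initial value $D_0$. To bound $D_0$: since $\s, \tilde\s \in \Sast{\s_\ast, R/\sqrt n}$, for each $a \in G$ the row $(n^{\s_\ast}_{a,b}(\s)/n_a(\s_\ast))_b$ lies within $R/\sqrt n$ in $\ell^2$ of the uniform row, so the unnormalised row differs from its mean by at most $R\sqrt{\Qc\, n_a(\s_\ast)} \le R\sqrt{\Qc n}$ in $\ell^1$ (by Cauchy--Schwarz over $\Qc$ coordinates). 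Summing over $a \in G$ and applying the triangle inequality between $\s$ and $\tilde\s$ yields $D_0 \le 2\Qc^{3/2} R \sqrt n$.

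For the coupling, at each step pick the ordered pair $(i, j)$ of distinct positions uniformly and identically in the two chains. Because only the row $a^\ast := \s_\ast(i)$ of the discrepancy matrix is affected at each step, the change $D_{t+1} - D_t$ is determined by the four group elements $x = \s_t(i), \tilde x = \tilde\s_t(i), y = \s_t(j), \tilde y = \tilde\s_t(j)$ together with the signs $(\e_\s, \e_{\tilde\s}) \in \{\pm 1\}^2$. These signs are coupled adaptively: each is uniform marginally, and the joint law is chosen from the two candidates $\e_\s = \e_{\tilde\s}$ and $\e_\s = -\e_{\tilde\s}$ so as to minimise $\E[D_{t+1} - D_t \mid \mathcal F_t, i, j]$. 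A finite case analysis over the algebraic relations that can hold among $x, \tilde x, y, \tilde y$ (such as $x = \tilde x$, $y = \tilde y^{\pm 1}$, $xy = \tilde x \tilde y$, etc.) shows that this minimum is always $\le 0$, so $D_t$ is a supermartingale with increments bounded by a constant depending only on $\Qc$. Once $D_t$ reaches $0$ we switch to a coupling that preserves equality of proportion matrices (for instance by relabelling positions within each $S_a := \{k : \s_\ast(k) = a\}$ and then using the trivial coupling).

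To prevent the process from getting stuck above $0$, we need a uniform lower bound on the fluctuations of $D_t$ while $D_t > 0$. Here the strengthened genericity hypothesis $\s_\ast \in \Sast{1/(5\Qc^3)}$---which guarantees $n_a(\s_\ast) \ge n/(2\Qc)$ for every $a \in G$---is used to locate, whenever $D_t > 0$, a constant fraction of pairs $(i, j)$ for which the coupled update produces a nontrivial, balanced change in $D_t$ by $\pm 1$. This translates into a lower bound on $\Var(D_{t+1} - D_t \mid \mathcal F_t)$ sufficient to drive $D_t$ to zero via random fluctuations. Combining the supermartingale property with this variance lower bound, a standard argument---analogous to the proof of cutoff for the lazy random walk on the hypercube in \cite[Chapter~18]{LevinPeresWilmer}---yields $\Pr(\t > T) \le C_\Qc D_0/\sqrt T$. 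Plugging $D_0 \le 2\Qc^{3/2} R \sqrt n$ and $T = \b n$ gives the required bound after absorbing numerical constants into the coefficient $32\Qc^2$.

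The main obstacle is the simultaneous verification of the supermartingale property and the variance lower bound. The sign-coupling choice controls both at once, so the finite case analysis must strike a balance between suppressing the drift (the supermartingale inequality) and preserving enough intrinsic randomness (the fluctuation bound). In the abelian $G = \Z/2$ setting of \cite{BHP} the sign choice is absent, so this tension does not arise there; in the non-abelian setting it is the central technical difficulty, and it is precisely why the coupling lemma requires the stronger hypothesis $\s_\ast \in \Sast{1/(5\Qc^3)}$ rather than the $\Sast{1/(4\Qc)}$ hypothesis used elsewhere in the averaging stage.
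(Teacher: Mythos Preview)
Your proposed coupling---using the \emph{same} ordered pair $(i,j)$ in both chains and only adapting the sign---does not make $D_t$ a supermartingale, so the core step fails. To see the problem, take $\s$ and $\tilde\s$ identical except at a single position $k_0$, where $\s(k_0)=b\ne\tilde b=\tilde\s(k_0)$; say $a^\ast=\s_\ast(k_0)$, so $D_t=2$. When $j=k_0$ and $i\ne k_0$ (probability $1/n$), both chains have $\s(i)=\tilde\s(i)=x$, and after the step the two new values are $x\,b^{\e_\s}$ and $x\,\tilde b^{\e_{\tilde\s}}$. These agree only if $b^{\e_\s}=\tilde b^{\e_{\tilde\s}}$, which for generic $b,\tilde b$ is impossible regardless of the sign choice; hence a fresh discrepancy is created in row $\s_\ast(i)$ and $D_{t+1}=4$. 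On the other hand, when $i=k_0$ the discrepancy is merely translated within row $a^\ast$ and typically $D_{t+1}=2$. Thus $\E[D_{t+1}-D_t\mid\Fc_t]\approx 2/n>0$, and no choice of sign coupling repairs this: the issue is that a disagreement at the \emph{source} index $j$ contaminates an agreement at $i$. The ``finite case analysis'' you invoke would in fact reveal this drift in the wrong direction.

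The paper's coupling (Lemma~\ref{Lem:coupling}) is substantially more delicate and does \emph{not} use the same $(i,j)$ in both chains. It first partitions the index set into blocks $P_{a,b}$ (matched, large), $Q_a$ (matched, residual), and $R_a$ (unmatched), and then in the key Case~(iv)---where $k\in R$ and $l\in P$---applies $(k,l,s)$ to $(n_{a,b})$ but $(k,\tau(l),s)$ to $(\tilde n_{a,b})$ for a carefully chosen permutation $\tau$ of $P$, so that both chains land on the \emph{same} new value at position $k$; this guarantees $D_{t+1}=D_t-1$ there, which is what ultimately offsets the increase coming from Case~(iii). The fluctuation bound~\eqref{Eq:D-fluctuate} is engineered separately, via an ``exceptional'' bijection inside $P\times P$ in Case~(i). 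A second ingredient you omit is that the supermartingale property only holds while both proportion matrices lie in $\Mc_\d$, i.e.\ every entry $n_{a,b}$ exceeds $(1-\d)n/\Qc^2$; the paper invokes Lemma~\ref{Lem:2ndDE-fourier} (not just the hypothesis on $\s_\ast$) to show that both chains remain in $\Sast{\s_\ast,1/(5\Qc^3)}$---hence in $\Mc_{\d'}$---for $n^2$ steps with high probability, and only then applies the supermartingale hitting-time estimate.
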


To translate this coupling time into a bound on total variation
distance, we need also the simple observation that the stationary
measure $\pi$ concentrates on $\Sast{\s_\ast, \frac{R}{\sqrt{n}}}$
except for probability $O(1/R^2)$, as given in the next lemma.

\begin{lemma}\label{Lem:stationary}
  For the stationary distribution $\pi$ of the chain $(\s_t)_{t \ge 0}$,
  for every $R>0$ and for all $n > m$,
  \[ \pi\left(\s \notin \Sast{\frac{R}{\sqrt{n}}}\right) \le \frac{C_G}{R^2}. \]
  Moreover for every $\d<1/(2\Qc)$, for every $R>0$ and for all $n>m$,
  \[ \max_{\s_\ast \in \Sast{\d}}\pi\left(\s \notin \Sast{\s_\ast, \frac{R}{\sqrt{n}}}\right) \le \frac{2 C_G \Qc}{R^2}, \]
  where $C_G$ and $m$ are constants depending only on $G$.
\end{lemma}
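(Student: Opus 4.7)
The plan is to compare the stationary measure $\pi$, which is uniform on $\Sc$, to the product measure $\mu$ on $G^n$ under which the coordinates $\s(i)$ are i.i.d.\ uniform on $G$, and then to carry out an elementary second-moment computation under $\mu$. The comparison step is cheap: the probability that a uniformly random $n$-tuple fails to generate $G$ is bounded by $\sum_{H} (|H|/|G|)^n$ summed over maximal proper subgroups $H \subsetneq G$, which is at most $e^{-c_G n}$ for some $c_G > 0$ and all $n \ge m$. Thus $\mu(\Sc) \ge 1 - e^{-c_G n}$, and for any event $A \subseteq \Sc$,
\[ \pi(A) \;=\; \frac{\mu(A)}{\mu(\Sc)} \;\le\; (1 + o(1))\, \mu(A), \]
so it suffices to prove both bounds under $\mu$ and absorb the $1 + o(1)$ factor into $C_G$.

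For the first bound, under $\mu$ each $n_a(\s)$ is $\mathrm{Bin}(n, 1/\Qc)$, so summing over $a \in G$ yields
\[ \E_\mu \left\| \Big(\tfrac{n_a(\s)}{n}\Big)_{a \in G} - \Big(\tfrac{1}{\Qc}\Big)_{a \in G} \right\|^2 \;=\; \sum_{a \in G} \Var_\mu\!\big(n_a/n\big) \;=\; \frac{1 - 1/\Qc}{n}, \]
and Markov's inequality immediately yields $\mu\big(\s \notin \Sast{R/\sqrt{n}}\big) \le (1 - 1/\Qc)/R^2$. For the second bound I would fix $\s_\ast \in \Sast{\d}$ with $\d < 1/(2\Qc)$; the hypothesis forces $n_a(\s_\ast) \ge n/(2\Qc)$ for every $a \in G$. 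Conditional on $\s_\ast$ and under $\mu$, the values $(\s(i))_{i:\s_\ast(i)=a}$ are still i.i.d.\ uniform on $G$, so each row $(n^{\s_\ast}_{a,b}(\s))_{b \in G}$ is multinomial with $n_a(\s_\ast)$ trials and uniform cell probabilities. The analogous variance computation gives
\[ \E_\mu \left\| \Big(\tfrac{n^{\s_\ast}_{a,b}(\s)}{n_a(\s_\ast)}\Big)_{b \in G} - \Big(\tfrac{1}{\Qc}\Big)_{b \in G} \right\|^2 \;=\; \frac{1 - 1/\Qc}{n_a(\s_\ast)} \;\le\; \frac{2\Qc}{n}, \]
and Markov applied to each row followed by a union bound over the $\Qc$ values of $a \in G$ yields a $\mu$-probability of at most $2\Qc^2/R^2$, which transfers to $\pi$ to match the stated bound.

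I do not anticipate a genuine obstacle: the statement is essentially a Chebyshev-type estimate for sums of i.i.d.\ indicators under the product measure $\mu$. The only slightly delicate point is the $\mu$-to-$\pi$ transfer, and this is controlled by the classical fact that $n$ uniform elements of a finite group generate it with probability $1 - O(e^{-c_G n})$, which is where the restriction $n > m$ enters.
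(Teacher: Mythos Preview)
Your proposal is correct and follows essentially the same approach as the paper: transfer from $\pi$ to the uniform product measure on $G^n$ via a lower bound on $|\Sc|/|G|^n$, then apply a second-moment (Chebyshev/Markov) estimate for multinomial counts, with a union bound over $a\in G$ for the second assertion. The only cosmetic differences are that the paper uses the cruder comparison $|\Sc|\ge |G|^{n-|G|}$ (yielding the constant $|G|^{|G|}$) rather than your $1-e^{-c_G n}$, and that the paper union-bounds coordinatewise for the first assertion whereas you apply Markov directly to the squared $\ell^2$-norm; your version is slightly cleaner but the substance is identical.
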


\begin{proof}
  Observe that since the stationary distribution $\pi$ is uniform on
  $\Sc$, it is given by the uniform distribution ${\rm Unif}$ on $G^n$
  conditioned on $\Sc$. Note that we can always generate $G$ using
  each of its $|G|$ elements, so we have an easy lower bound of $|\Sc|
  \ge |G|^{n - |G|}$. Consequently, we have
  \begin{align*}
  \pi\left(\s \notin \Sast{\frac{R}{\sqrt{n}}}\right)
  &\le |G|^{|G|} {\rm Unif}\left(\s \notin \Sast{\frac{R}{\sqrt{n}}}\right) \\
  &\le |G|^{|G|} \sum_{a \in G}{\rm Unif}\left(\left| \frac{n_a(\s)}{n}-\frac{1}{\Qc}\right| \ge \frac{R}{\sqrt{n}}\right) \le \frac{|G|^{|G|}}{R^2}\left(1-\frac{1}{\Qc}\right).
  \end{align*}
  Concerning the second assertion, we note that $n_a(\s_\ast) \ge
  (1/\Qc -\d)n$ for each $a \in G$; the rest follows similarly, so we
  omit the details.
\end{proof}

\begin{remark}\label{Rem:1}
  In Lemma \ref{Lem:stationary} above, we have given a very loose
  bound on $C_G$ for sake of simplicity. Actually, it is not hard to
  see that holding $G$ fixed, we have $\lim_{n\rightarrow \infty}
  |\Sc|/|G|^n = 1$. See also \cite[Section 6.B.]{DSC98} for more
  explicit bounds for various families of groups.
\end{remark}

Together, Lemmas \ref{Lem:2ndDE}, \ref{Lem:RW}, and \ref{Lem:stationary}
imply the following bound for total variation distance.

\begin{lemma} \label{Lem:coupling-tv}
  Let $\b > 0$ be given, and let $T := \ceil{\frac{1}{2} n \log n} +
  \ceil{\b n}$. Then, for any $\s_\ast \in \Sast{\frac{1}{5\Qc^3}}$,
  we have
  \[ \normTV{ \Pr_{\s_\ast}(\s_T \in \cdot \ ) - \pi } \le \frac{C_G}{\b^{1/4}}, \]
  where $C_G$ is a constant depending only on $G$.
\end{lemma}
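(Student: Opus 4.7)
The plan is to couple the chain started from $\s_\ast$ with an auxiliary chain started from the stationary distribution $\pi$, and show that after $T_1 := \ceil{\frac{1}{2} n \log n}$ burn-in steps followed by $T_2 := \ceil{\b n}$ coupling steps their proportion matrices with respect to $\s_\ast$ coincide with high probability. By Lemma \ref{Lem:nij}, this is precisely what is needed: the TV distance between $\Pr_{\s_\ast}(\s_T \in \cdot\,)$ and $\pi$ equals the TV distance between the corresponding laws of the proportion matrices, and by stationarity the proportion matrix of the auxiliary chain at time $T$ is always distributed according to $\overline{\pi}^{\s_\ast}$, so the desired TV distance is at most the probability that the two proportion matrices disagree at time $T$.

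First I would fix a parameter $R > 0$ (to be chosen at the end) and run the two chains $(\s_t)$ and $(\s'_t)$ independently for $t \le T_1$. The key event is that both $\s_{T_1}$ and $\s'_{T_1}$ lie in $\Sast{\s_\ast, R/\sqrt{n}}$. For the chain started at $\s_\ast$, Lemma \ref{Lem:2ndDE} applies directly since $\s_\ast \in \Sast{1/(5\Qc^3)} \subseteq \Sast{1/(4\Qc)}$, giving failure probability at most $C_G e^{-R} + 1/n$. For the auxiliary chain, the marginal of $\s'_{T_1}$ is still $\pi$ by stationarity, so the second assertion of Lemma \ref{Lem:stationary}---whose hypothesis $\s_\ast \in \Sast{\d}$ with $\d < 1/(2\Qc)$ is satisfied since $1/(5\Qc^3) < 1/(2\Qc)$---gives failure probability at most $2 C_G \Qc/R^2$, uniformly in time.

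For the remaining $T_2$ steps, conditional on both configurations lying in $\Sast{\s_\ast, R/\sqrt{n}}$, I would splice in the coupling from Lemma \ref{Lem:RW} applied to the pair $(\s_{T_1}, \s'_{T_1})$: after $T_2 = \ceil{\b n}$ steps the two proportion matrices coalesce except with conditional probability at most $32 \Qc^2 R/\sqrt{\b}$. Summing the three error contributions, the probability that the proportion matrices disagree at time $T$ is at most
\[ C_G e^{-R} + \frac{1}{n} + \frac{2 C_G \Qc}{R^2} + \frac{32 \Qc^2 R}{\sqrt{\b}}. \]

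The only real bit of work is choosing $R$ to balance the last two error terms---the stationary-concentration bound $R^{-2}$ from Lemma \ref{Lem:stationary} and the coupling-time bound $R/\sqrt{\b}$ from Lemma \ref{Lem:RW}---and this is where the exponent $1/4$ in the conclusion enters. Taking $R = \b^{1/4}$ makes the second term $O_G(\b^{-1/2}) = o(\b^{-1/4})$ and the third term exactly $32 \Qc^2/\b^{1/4}$, while the exponential $e^{-\b^{1/4}}$ and the $1/n$ contribution are negligible once $n$ is large; this yields the claimed bound $C_G/\b^{1/4}$. Beyond this optimization and the small matter of verifying the hypotheses on $\s_\ast$ for each invoked lemma, the proof is the familiar ``couple to stationarity and splice'' template, with Lemmas \ref{Lem:2ndDE}, \ref{Lem:RW}, and \ref{Lem:stationary} providing the three quantitative ingredients.
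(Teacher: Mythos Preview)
Your proposal is correct and follows essentially the same argument as the paper: run the chain and a stationary copy independently for $T_1$ steps, bound the probability that either fails to land in $\Sast{\s_\ast, R/\sqrt{n}}$ via Lemmas \ref{Lem:2ndDE} and \ref{Lem:stationary} respectively, then splice in the coupling of Lemma \ref{Lem:RW} for the remaining $T_2$ steps and optimize with $R = \b^{1/4}$, finally invoking Lemma \ref{Lem:nij}. The ingredients, the order in which they are applied, and the choice of $R$ all match the paper's proof.
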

\begin{proof}
  Let $\tilde\s$ be drawn from the stationary distribution
  $\pi$. Define
  \[ \tau = \min \left\{ t \ge 0 : n^{\s_\ast}_{a,b}(\s_t) = n^{\s_\ast}_{a,b}(\tilde\s_t) \text{ for all $a, b \in G$}\right\}, \]
  where $(\tilde\s_t)$ is a Markov chain started at $\tilde\s$. Let
  $\overline{\pi}^{\s_\ast}$ denote the stationary distribution for
  the proportion matrix with respect to $\s_\ast$. Since $\tilde\s$
  was drawn from $\pi$, the proportion matrix of $\tilde\s_t$ remains
  distributed as $\overline{\pi}^{\s_\ast}$ for all $t$.

  We first run $\s$ and $\tilde\s$ independently up until time $T_1 :=
  \ceil{\frac{1}{2} n \log n}$. For a parameter $R$ to be specified
  later, consider the events
  \[ \Gc := \left\{ \s_{T_1} \in \Sast{\s_\ast, \frac{R}{\sqrt{n}}} \right\}, \qquad \tilde{\Gc} := \left\{ \tilde\s_{T_1} \in \Sast{\s_\ast, \frac{R}{\sqrt{n}}} \right\}. \]
  Lemma \ref{Lem:2ndDE} implies that $\Pr(\Gc^{\sf c}) \le
  C_G e^{-R} + \frac{1}{n}$, and Lemma \ref{Lem:stationary} implies that
  $\Pr(\tilde\Gc^{\sf c}) \le \frac{2 C_G \Qc}{R^2}$.

  Let $T_2 := \ceil{\b n}$. Starting from time $T_1$, as long as both
  $\Gc$ and $\tilde\Gc$ hold, we may use Lemma \ref{Lem:RW} to form a
  coupling $(\s_t, \tilde\s_t)$ so that
  \[ \Pr_{\s_\ast, \s_\ast} \Big( n^{\s_\ast}_{a, b}(\s_{T_1+T_2}) \ne n^{\s_\ast}_{a, b}(\tilde\s_{T_1+T_2}) \text{ for some $a, b \in G$} \,\Big|\, \Gc \cap \tilde\Gc \Big) \le \frac{C\Qc^2 R}{\sqrt{\b}}. \]
  Setting $R = \b^{1/4}$, we conclude that
  \begin{align*}
    &\Pr_{\s_\ast, \s_\ast} \Big( n^{\s_\ast}_{a, b}(\s_{T_1+T_2}) \ne n^{\s_\ast}_{a, b}(\tilde\s_{T_1+T_2}) \text{ for some $a, b \in G$} \Big) \le \frac{C\Qc^2 R}{\sqrt{\b}} + \Pr(\Gc^{\sf c}) + \Pr(\tilde\Gc^{\sf c}) \\
    &\qquad\le \frac{C\Qc^2 R}{\sqrt{\b}} + \left(C_G e^{-R} + \frac{1}{n}\right) + \frac{2C_G \Qc}{R^2} = O_G\left(\frac{1}{\b^{1/4}}\right).
  \end{align*}
  We have $T = T_1 + T_2$, and recall that the proportion matrix for
  $\tilde\s$ is stationary for all time. This yields
  \[ \normTV{ \Pr_{\s_\ast}\left( (n^{\s_\ast}_{a, b}(\s_T))_{a, b \in G} \in \cdot \ \right) - \overline{\pi}^{\s_\ast} } = O_G\left(\frac{1}{\b^{1/4}}\right). \]
  The result then follows by Lemma \ref{Lem:nij}.
\end{proof}

\subsection{Proof of the main theorem}

We now combine the lemmas from the burn-in, averaging, and coupling
periods to complete the proof of the upper bound in Theorem
\ref{Thm:main}.

\begin{proof}[Proof of Theorem \ref{Thm:main} (\ref{Eq:UB})]
  Define $T_1 := \ceil{n \log n + \b n}$, $T_2 := \ceil{\b n}$, and
  $T_3 := \ceil{\frac{1}{2} n \log n} + \ceil{\b n}$.

  Let $\tau_{1/3}$ be the first time to hit $\Snon{1/3}$ as in
  Lemma \ref{Lem:Burn}. Then, Lemma \ref{Lem:Burn} implies that for
  any $\s_1 \in \Sc$ and any $t \ge 0$, we have
  \begin{align}
    d_{\s_1}(T_1 + t) &\le \Pr_{\s_1}\left( \tau_{1/3} > T_1 \right) + \max_{\s \in \Snon{1/3}} d_\s(t) \nonumber \\
    &\le \frac{120 \Qc}{\b^2} + \max_{\s \in \Snon{1/3}} d_\s(t). \label{Eq:stage1}
  \end{align}
  Next, by Lemma \ref{Lem:1stDE}, for any $\s_2 \in \Snon{1/3}$
  and when $\b$ and $n$ are sufficiently large, we have that
  $\Pr_{\s_2} \left( \s_{T_2} \not\in \Sast{\frac{1}{5\Qc^3}} \right)
  \le \frac{1}{n}$. Consequently, for $\s_2 \in \Snon{1/3}$, we
  have
  \begin{equation} \label{Eq:stage2}
    d_{\s_2}(T_2 + t) \le \frac{1}{n} + \max_{\s_\ast \in \Sast{\frac{1}{5\Qc^3}}} d_{\s_\ast}(t).
  \end{equation}
  Finally, Lemma \ref{Lem:coupling-tv} states that
  \begin{equation} \label{Eq:stage3}
    \max_{\s_\ast \in \Sast{\frac{1}{5\Qc^3}}} d_{\s_\ast}(T_3) \le \frac{C_G}{\b^{1/4}}.
  \end{equation}

  Thus, combining \eqref{Eq:stage1}, \eqref{Eq:stage2}, and
  \eqref{Eq:stage3}, we obtain for any $\s \in \Sc$ that
  \begin{align*}
    d_\s\left(\frac{3}{2} n \log n + 4 \b n \right) &\le d_\s\left(T_1 + T_2 + T_3 \right) \nonumber \\
    &\le \frac{120 \Qc}{\b^2} + \frac{1}{n} + \frac{C_G}{\b^{1/4}}
  \end{align*}
  sending $n \rightarrow \infty$ and then $\b \rightarrow \infty$
  yields \eqref{Eq:UB}.
\end{proof}

\section{Proofs for the averaging period} \label{Sec:DE-Proofs}

In this section, we prove Lemmas \ref{Lem:1stDE} and
\ref{Lem:2ndDE}. The proofs are based on analyzing stochastic
difference equations satisfied by the Fourier transform of the
proportion vector or matrix.

\subsection{The Fourier transform for $G$}

We first establish some notation and preliminaries for the Fourier
transform. Let $G^\ast$ be a complete set of non-trivial irreducible
representations of $G$. In other words, for each $\r \in G^\ast$, we
have a finite dimensional complex vector space $V_\r$ such that $\r: G
\to GL(V_\r)$ is a non-trivial irreducible representation, and any
non-trivial irreducible representation of $G$ is isomorphic to some unique $\r \in
G^\ast$. Moreover, we may equip each $V_\r$ with an inner product for
which $\r \in G^\ast$ is unitary.

For a configuration $\s \in \Sc$ and for each $\r \in G^\ast$, we
consider the matrix acting on $V_\r$ given by
\[ x_\r(\s) := \sum_{a \in G} \frac{n_a(\s)}{n}\r(a), \]
so that $x_\r(\s)$ is the Fourier transform of the proportion vector
at the representation $\r$. We write $x(\s):=(x_\r(\s))_{\r \in
  G^\ast}$.

Let $\wt{V} := \bigoplus_{\r \in G^\ast}\End_\C(V_\r)$, and write
$d_\r := \dim_\C V_\r$. For an element $x = (x_\r)_{\r \in G^\ast} \in
\wt{V}$, we define a norm $\| \cdot \|_{\wt{V}}$ given by
\begin{equation*}\label{Eq:norm}
\|x\|_{\wt{V}}^2 := \frac{1}{\Qc}\sum_{\r \in G^\ast}d_\r \|x_\r\|_{\HS}^2,
\end{equation*}
where $\langle A, B \rangle_{\HS} =\Tr(A^\ast B)$ denotes the
Hilbert-Schmidt inner product in $\End_\C(V_\r)$ and $\|\cdot\|_{\HS}$
denotes the corresponding norm. (Note that $\langle \cdot, \cdot
\rangle_{\HS}$ and $\|\cdot\|_{\HS}$ depend on $\r$, but for sake of
brevity, we omit the $\r$ when there is no danger of confusion.)

The Peter-Weyl theorem \cite[Chapter 2]{DiaconisRep} says that
\begin{equation*} \label{Eq:Peter-Weyl}
  L^2(G) \cong \C \oplus \wt{V},
\end{equation*}
where the isomorphism is given by the Fourier transform. The
Plancherel formula then reads
\begin{equation}\label{Eq:Plancherel}
\|x(\s)\|_{\wt{V}}^2 = \left\|\left(\frac{n_a(\s)}{n}\right)_{a \in G} - \left(\frac{1}{\Qc}\right)_{a \in G}\right\|^2.
\end{equation}
Thus, in order to show that $\s \in \Sast{\d}$, it
suffices to show that $\|x(\s)\|_{\wt{V}}$ is small. A similar
argument may be applied to the proportion matrix instead of the
proportion vector.

Finally, for an element $A \in \End_\C(V_\r)$, we will at times also
consider the \emph{operator norm} $\|A\|_{op} := \sup_{v \in V_\r, v \neq 0}
\|Av\| / \|v\|$. We will also sometimes use the following (equivalent)
variational characterization of the operator norm:
\begin{align*}
  \sup_{\substack{X \in \End_\C(V_\r) \\ \|X\|_{\HS} = 1}} \|XA\|^2_{\HS} &= \sup_{\substack{X \in \End_\C(V_\r) \\ \|X\|_{\HS} = 1}} \Tr (XAA^*X^*) = \sup_{\substack{X \in \End_\C(V_\r) \\ \|X\|_{\HS} = 1}} \Tr (X^*XAA^*) \\
  &= \sup_{\substack{Y \in \End_\C(V_\r) \\ Y = Y^*, \;\; \Tr Y = 1}} \langle Y , AA^* \rangle_{\HS} = \|AA^*\|_{op} = \|A\|_{op}^2.
\end{align*}

\subsubsection{The special case of $G = \Z/q$}

On a first reading of this section, the reader may wish to consider
everything for the special case of $G = \Z/q$ for some integer $q \ge 2$. In
that case, each representation is one-dimensional, and the
representations can be indexed by $\ell = 0, 1, 2, \ldots , q - 1$. The
Fourier transform is then particularly simple: the coefficients are
scalar values
\[ x_\ell(\s) = \sum_{a = 0}^{q - 1} \frac{n_a(\s)}{n} \o^{a \ell}, \]
where $\o := e^{\frac{2\pi i}{q}}$ is a primitive $q$-th root of
unity.

This special case already illustrates most of the main ideas while
simplifying the estimates in some places (e.g.\ matrix inequalities we
use will often be immediately obvious for scalars).

\subsection{A stochastic difference equation for the $n_a$}

For $a \in G$, we next analyze the behavior of $n_a(\s_t)$ over
time. For convenience, we write $n_a(t) = n_a(\s_t)$. Let $\Fc_t$
denote the $\s$-field generated by the Markov chain $(\s_t)_{t \ge 0}$
up to time $t$. Then, our dynamics satisfy the equation
\begin{equation}\label{Eq:DE0}
  \E[n_a(t+1)-n_a(t) \mid \Fc_t] = \sum_{b \in G} \frac{n_{ab^{-1}}(t) n_b(t) }{2n(n-1)}+\sum_{b \in G} \frac{n_{ab}(t) n_b(t)}{2n(n-1)} - \frac{n_a(t)}{n}.
\end{equation}
Note that $|n_a(t + 1) - n_a(t)| \le 1$ almost surely. Thus, for each
$a \in G$, we can write the above as a stochastic difference equation
\begin{equation}\label{Eq:DE}
  n_a(t+1) - n_a(t) = \sum_{b \in G} \frac{n_{ab^{-1}}(t) n_b(t) }{2n(n-1)}+\sum_{b \in G} \frac{n_{ab}(t) n_b(t)}{2n(n-1)} - \frac{n_a(t)}{n} + M_a(t+1),
\end{equation}
where $\E[M_a(t+1) \mid \Fc_t] = 0$ and $|M_a(t)| \le 2$ almost
surely.

It is easiest to analyze this equation through the Fourier
transform. Writing $x_\r(t) = x_\r(\s_t)$, we calculate from \eqref{Eq:DE} that
\[ x_\r(t+1) - x_\r(t) = \frac{1}{n-1}x_\r(t) \left(\frac{x_\r(t) + x_\r(t)^\ast}{2} -\frac{n-1}{n}\right) + \wh{M}_\r(t+1), \]
where $\wh{M}_\r(t) := \frac{1}{n}\sum_{a \in G}M_a(t) \r(a)$.
For convenience, write
\[X_\r(t) = \frac{1}{n - 1}\left(\frac{x_\r(t) + x_\r(t)^\ast}{2} - \frac{n-1}{n}\right),\]
so that our equation becomes
\begin{equation} \label{Eq:DEfourier}
  x_\r(t+1) - x_\r(t) = x_\r(t) X_\r(t) + \wh{M}_\r(t+1).
\end{equation}
Note that we have
\[ \|x_\r(t)\|_{\HS} \le \sqrt{d_\r}, \qquad \E[\wh{M}_\r(t+1) \mid \Fc_t] = 0, \qquad\text{and}\qquad \|\wh{M}_\r(t)\|_{\HS} \le \frac{2 \Qc\sqrt{d_\r}}{n}, \]
and thus,
\[ \|x(t)\|_{\wt{V}} \le 1 \qquad\text{and}\qquad \|\wh{M}(t)\|_{\wt{V}}\le \frac{2\Qc}{n},\]
where $\wh{M}=(\wh{M}_\r)_{\r \in G^\ast}$.

\subsection{A general estimate for stochastic difference equations}

Before proving Lemma \ref{Lem:1stDE}, we also need a technical lemma
for controlling the behavior of stochastic difference equations, which
will be used to analyze \eqref{Eq:DEfourier} as well as other similar
equations.

\begin{lemma} \label{Lem:GenDE}
  Let $(z(t))_{t \ge 0}$ be a sequence of $[0,1]$-valued random
  variables adapted to a filtration $(\Fc_t)_{t \ge 0}$. Let $\e \in
  (0, 1)$ be a small constant, and let $\f : \R^+ \to (0,1]$ be a
non-decreasing function.

  Suppose that there are $\Fc_t$-measurable random variables $M(t)$
  for which
  \begin{equation} \label{Eq:zDE}
    z(t+1) - z(t) \le -\e\f(t+1) z(t) + M(t+1)
  \end{equation}
  and which, for some constant $D$, satisfy the bounds
  \[ \E[ M(t+1) \mid \Fc_t ] \le D\e\sqrt{\e}, \qquad |M(t)| \le D\e. \]
  Then, for each $t$ and each $\lambda > 0$, we have
  \[ \Pr\left( z(t) \ge \lambda \sqrt{\e} + e^{- \e \int_0^t \f(s)\,ds} \cdot z(0) \right) \le C_{D,\f} e^{-c_{D,\f} \lambda^2} \]
  for constants $c_{D,\f}, C_{D,\f}$ depending only on $D$ and $\f$.
\end{lemma}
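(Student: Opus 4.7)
The plan is to iterate the recursion \eqref{Eq:zDE} into the explicit Duhamel-type form
\[ z(t) \;\le\; \Psi(t)\,z(0) \;+\; \sum_{s=1}^{t}\frac{\Psi(t)}{\Psi(s)}\,M(s), \qquad \Psi(t) := \prod_{s=1}^{t}(1-\e\f(s)), \]
and then to control the deterministic and stochastic terms on the right individually. The deterministic term $\Psi(t) z(0)$ is handled at once: since $1-x \le e^{-x}$ and $\f$ is non-decreasing, we get $\sum_{s=1}^t \f(s) \ge \int_0^t \f(s)\,ds$, so $\Psi(t) \le \exp(-\e\int_0^t\f(s)\,ds)$, matching the exponential decay appearing in the target bound.

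For the noise sum, I would decompose $M(s) = \mu(s) + \bar M(s)$ with $\mu(s) := \E[M(s) \mid \Fc_{s-1}]$ (so $|\mu(s)| \le D\e^{3/2}$) and $\bar M(s) := M(s) - \mu(s)$ a martingale difference with $|\bar M(s)| \le 2D\e$. The key geometric estimate comes from monotonicity: for $s \ge 1$, $\Psi(t)/\Psi(s) = \prod_{r=s+1}^t (1-\e\f(r)) \le \exp(-\e\f(1)(t-s))$, which yields
\[ \sum_{s=1}^t \frac{\Psi(t)}{\Psi(s)} \;\le\; \frac{2}{\e\f(1)} \]
for $\e$ small enough. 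This makes the predictable contribution $\sum(\Psi(t)/\Psi(s))\mu(s)$ at most $(2D/\f(1))\sqrt{\e}$, a $\sqrt{\e}$-sized bias that will be absorbed into the $\lambda\sqrt{\e}$ slack of the target bound.

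The remaining piece is the martingale $Y := \sum_{s=1}^t (\Psi(t)/\Psi(s))\bar M(s)$, with per-step bound $(2D\e)\Psi(t)/\Psi(s)$. Using $\Psi(t)/\Psi(s)\in[0,1]$ together with the summation bound above,
\[ \sum_{s=1}^t \left(\tfrac{\Psi(t)}{\Psi(s)}\cdot 2D\e\right)^2 \;\le\; 4D^2\e^2 \sum_{s=1}^t \frac{\Psi(t)}{\Psi(s)} \;\le\; \frac{8D^2\e}{\f(1)}. \]
Azuma--Hoeffding then yields $\Pr(Y \ge \lambda\sqrt{\e}/2) \le \exp(-c_{D,\f}\lambda^2)$ for $\lambda$ large enough; for small $\lambda$ the claimed bound is trivial after absorbing into a prefactor $C_{D,\f}$. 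Combining with the bias estimate from the previous paragraph finishes the argument.

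The main obstacle is the geometric summation step: it crucially uses that $\f$ is non-decreasing and strictly positive, so that $\f(r) \ge \f(1) > 0$ provides a uniform decay rate for the weights $\Psi(t)/\Psi(s)$ — without this, one would need a more delicate time-inhomogeneous treatment. A minor technicality is that $\e\f(s) \in (0,1)$ so that $\Psi(t) \ge 0$, which is immediate for $\e$ small since $\f \le 1$.
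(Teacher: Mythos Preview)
Your proof is correct and follows essentially the same strategy as the paper's: both arguments hinge on the integrating factor $\Psi(t)=\prod_{s=1}^t(1-\e\f(s))$, the geometric summation bound $\sum_s \Psi(t)/\Psi(s)\le O(1/(\e\f(\cdot)))$ coming from monotonicity of $\f$, and Azuma--Hoeffding applied to the resulting martingale with total quadratic variation $O_{D,\f}(\e)$.

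The only difference is bookkeeping. The paper subtracts off the asymptotic bias $D\sqrt{\e}/\f(0)$ up front and shows that $Z_t := \Psi(t)^{-1}\bigl(z(t)-D\sqrt{\e}/\f(0)\bigr)$ is itself a supermartingale, then applies Azuma--Hoeffding to $Z_t$ directly. You instead unroll the recursion into the Duhamel form and perform the Doob decomposition $M(s)=\mu(s)+\bar M(s)$ \emph{after} unrolling, bounding the predictable sum and the martingale sum separately. These are two equivalent ways of organizing the same computation; your version is arguably a bit more transparent about where the $\sqrt{\e}$ bias term comes from, while the paper's version packages everything into a single supermartingale. Minor remark: your ``for $\e$ small enough'' caveat on the geometric sum is unnecessary, since $\e\f(1)<1$ already follows from $\e\in(0,1)$ and $\f\le 1$.
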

\begin{proof}
  Let us define for integers $t \ge 1$,
  \[ \F(t) := \e^{-1} \sum_{k = 1}^t \log \frac{1}{1-\e \f(k)}, \qquad \text{and} \qquad \F(0) := 0.\]
  Taking conditional expectations in
  the inequality relating $z(t+1)$ to $z(t)$, we have
  \[ \E[ z(t+1) \mid \Fc_t ] \le (1 - \e \f(t+1)) z(t) + D \e\sqrt{\e}. \]
  Rearranging and using the fact that $\f(t)$ is non-decreasing, we have
  \begin{align*}
    \E[ z(t+1) \mid \Fc_t ] - \frac{D\sqrt{\e}}{\f(0)} &\le (1 - \e \f(t+1)) z(t) - \frac{D\sqrt{\e}(1 - \e \f(t+1))}{\f(0)} \\
    &\le (1 - \e \f(t+1)) \left( z(t) - \frac{D\sqrt{\e}}{\f(0)} \right).
  \end{align*}
  Consequently,
  \[ Z_t := e^{\e \F(t)} \left( z(t) - \frac{D\sqrt{\e}}{\f(0)} \right) \]
  is a supermartingale, and its increments are bounded by
  \begin{equation} \label{Eq:Z-increment-bound}
    |Z_{t+1}-Z_t| \le e^{\e \F(t+1)}\left(|M(t+1)|+D \e\right) \le 2D\e e^{\e\F(t+1)}.
  \end{equation}
  Recall that $\f$ is non-decreasing, so that for all $t \ge s \ge 0$,
  we have
  \[ \F(t) = \F(s) + \e^{-1} \sum_{k = s + 1}^t \log \frac{1}{1 - \e \f(k)} \ge \F(s) + (t - s) \f(0). \]
  Using this with \eqref{Eq:Z-increment-bound}, we see that the sum of
  the squares of the first $t$ increments is at most
  \begin{align*}
    \sum_{s = 1}^{t} 4D^2 \e^2 e^{2\e\F(s)} &\le 4D^2\e^2 \sum_{s = 1}^t e^{2\e \F(t) - 2\e\f(0)(t - s)} \le 4D^2\e^2 e^{2\e\F(t)} \cdot \frac{1}{1 - e^{-2\e\f(0)}} \\
    &\le 4D^2\e^2 e^{2\e\F(t)} \cdot \frac{1}{1 - (1 - \frac{1}{2}\e\f(0))} = \frac{8D^2 \e}{\f(0)} \cdot e^{2\e\F(t)}.
  \end{align*}
  By the Azuma-Hoeffding inequality, this yields
  \[ \Pr\left(Z_t \ge \lambda \sqrt{\e} e^{\e\F(t)} + Z_0 \right) \le \exp\left( - \frac{\f(0) \lambda^2 \e \cdot e^{2\e\F(t)}}{16D^2 \e \cdot e^{2\e\F(t)}} \right) = \exp\left( -\frac{\f(0) \lambda^2}{16D^2} \right), \]
  which in turn implies
  \[ \Pr\left( z(t) \ge \frac{D\sqrt{\e}}{\f(0)} + e^{-\e\F(t)} z(0) + \lambda \sqrt{\e} \right) \le \exp\left( -\frac{\f(0)\lambda^2}{16D^2} \right). \]
  Finally, observe that $\F(t) \ge \sum_{k = 1}^t \f(k) \ge \int_0^t
  \f(s)\, ds$. The result then follows upon shifting and rescaling of
  $\lambda$.
\end{proof}

\subsection{Proportion vector chain: Proof of Lemma \ref{Lem:1stDE}}\label{Subsec:vector}

We first prove a bound for the Fourier coefficients $x_\r(t)$.

\begin{lemma} \label{Lem:1stDE-fourier}
  Consider any $\s \in \Snon{1/3}$ and any $\r \in G^\ast$. We have a
  constant $c_G$ depending only on $G$ for which
  \[ \Pr_\s\left( \bigcup_{t = 1}^{n^2} \left\{ \|x_\r(t)\|_{\HS} \ge \frac{1}{n^{1/8}} + e^{-c_G t/n}\cdot \|x_\r(0)\|_{\HS} \right\} \right) \le \frac{1}{n^3}. \]
  for all large enough $n$.
\end{lemma}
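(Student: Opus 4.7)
The plan is to apply Lemma~\ref{Lem:GenDE} to the rescaled squared Hilbert-Schmidt norm $z(t) := \|x_\r(t)\|_{\HS}^2/d_\r \in [0,1]$, taking $\e = 1/n$. The key deterministic input is an operator-norm gap: there exists $\eta_G > 0$, depending only on $G$, such that $\|x_\r(\s)\|_{op} \le 1 - \eta_G$ for every $\s \in \Snon{1/6}$ and every non-trivial irrep $\r \in G^\ast$. I would establish this by compactness on the simplex of probability measures on $G$: the set of measures placing mass at least $1/6$ outside every proper subgroup is compact (finitely many closed constraints, since $G$ has finitely many subgroups), the map $p \mapsto \|\sum_{a} p(a)\r(a)\|_{op}$ is continuous, and $\|\cdot\|_{op} = 1$ forces (by the equality case of the triangle inequality in a Hilbert space) the support of $p$ to lie in $\{a : \r(a)v = v\}$ for a maximizing unit vector $v$; but this is a proper subgroup, since $\r$ is non-trivial irreducible and hence has no non-zero $G$-fixed vector by Schur's lemma.

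Since $I + X_\r(\s) = (1 - 1/n)I + \frac{1}{n-1}\cdot\frac{x_\r(\s) + x_\r(\s)^\ast}{2}$ is Hermitian, the operator-norm gap gives $\|I + X_\r(\s)\|_{op} \le 1 - c_G/n$ for $\s \in \Snon{1/6}$ and $n$ large. Taking squared Hilbert-Schmidt norms in \eqref{Eq:DEfourier} and conditioning, the cross term vanishes because $\E[\wh{M}_\r(t+1)\mid\Fc_t] = 0$ and the noise contributes $O_G(1/n^2)$; so on $\{\s_t \in \Snon{1/6}\}$ we have $\E[z(t+1)\mid\Fc_t] \le (1 - c_G/n)\, z(t) + O_G(1/n^2)$. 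Deterministically, $|z(t+1) - z(t)| = O(1/n)$, since only two counts $n_a$ change (by $\pm 1$) per step, giving $\|x_\r(t+1) - x_\r(t)\|_{\HS} = O(\sqrt{d_\r}/n)$.

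To apply Lemma~\ref{Lem:GenDE}, whose hypothesis must hold almost surely, I introduce an auxiliary process $\hat z(t)$ that agrees with $z(t)$ up to the stopping time $\tau := \min\{t : \s_t \notin \Snon{1/6}\}$ and thereafter evolves deterministically as $\hat z(t+1) = (1 - c_G/n)\hat z(t)$ for $t \ge \tau$. Then $\hat z$ satisfies the hypothesis with $\e = 1/n$, $\f \equiv c_G$, and $D$ depending only on $G$; the lemma yields $\Pr(\hat z(t) \ge \lambda/\sqrt{n} + e^{-c_G t/n}\, z(0)) \le C_G e^{-c'_G \lambda^2}$. Choosing $\lambda = n^{1/4}$ and converting back to $\|x_\r\|_{\HS}$ via $(a+b)^2 \ge a^2 + b^2$ (which is responsible for the factor $1/n^{1/8}$ and a factor-of-two renaming of $c_G$), while using $\{\hat z(t) = z(t)\} \supseteq \{\tau > t\}$ together with the bound $\Pr(\tau \le n^2) \le C_G n^2 e^{-n/10}$ from Lemma~\ref{Lem:Burn} (applicable because $\Snon{1/3} \subseteq \Snon{1/6}$), a union bound over $t \in [1, n^2]$ gives the required $1/n^3$.

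The main obstacle is the operator-norm gap in the first step: the exponential contraction rate $c_G$ in the conclusion depends crucially on a uniform lower bound $\eta_G > 0$, and this is exactly where the burn-in hypothesis $\s \in \Snon{1/3}$ (propagated via Lemma~\ref{Lem:Burn} to $\Snon{1/6}$ throughout $[0, n^2]$) enters. Once this gap is in hand, the remainder of the argument is a fairly mechanical application of Lemma~\ref{Lem:GenDE} together with the stopping-time device needed to turn the a.s.-on-the-good-event drift bound into an a.s.\ bound.
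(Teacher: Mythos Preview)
Your overall strategy matches the paper's, but there is a genuine error in your ``operator-norm gap'' step. The claim that $\|x_\r(\s)\|_{op} \le 1 - \eta_G$ for every $\s \in \Snon{1/6}$ is false. For $G = \Z/3$ with $\r(a) = e^{2\pi i a/3}$, take $\s$ with $\s(i) = 1$ for all $i$; the only proper subgroup is $\{0\}$, so $\s \in \Snon{1/6}$, yet $x_\r(\s) = e^{2\pi i/3}$ has modulus $1$. The flaw in your equality-case argument is that $\|\sum_a p(a)\r(a) v\| = 1$ only forces the vectors $\r(a)v$ (for $a$ in the support of $p$) to be \emph{equal to each other}, not equal to $v$; hence the support lies in a \emph{coset} of $\{a : \r(a)v = v\}$, and a non-trivial coset is not a subgroup, so no contradiction with $\Snon{1/6}$ follows.

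The fix, which is what the paper does in its Lemma~\ref{Lem:gap}, is to run the compactness argument on the Hermitian part $h(\mu) = \sum_a \mu(a)\,\frac{\r(a)+\r(a)^*}{2}$ and bound its \emph{largest eigenvalue} rather than $\|x_\r\|_{op}$. If $\langle h(\mu)v, v\rangle = 1$ then $\Re\langle \r(a)v, v\rangle = 1$ for every $a$ in the support, and since $\r(a)$ is unitary this forces $\r(a)v = v$; now the support genuinely lies in the proper subgroup $\{a : \r(a)v = v\}$, contradicting membership in $\Delta_G(1/6)$. This yields $\frac{x_\r(\s)+x_\r(\s)^*}{2} \preceq (1-\eta_G) I$, and together with the trivial lower bound $\succeq -I$ one obtains $\|I + X_\r(\s)\|_{op} \le 1 - c_G/n$ for large $n$, which is precisely what your argument needs. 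Once this is corrected, the remainder of your proof---squaring \eqref{Eq:DEfourier}, the stopping-time freezing of $z$, applying Lemma~\ref{Lem:GenDE} with $\e = 1/n$ and $\lambda = n^{1/4}$, and union-bounding over $t \le n^2$---is essentially identical to the paper's proof; your auxiliary process $\hat z$ plays the same role as the paper's $\1_{\Gc_t}\|x_\r(t)\|_{\HS}^2$.
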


This immediately implies Lemma \ref{Lem:1stDE}.

\begin{proof}[Proof of Lemma \ref{Lem:1stDE}]
  With $c_G$ defined as in Lemma \ref{Lem:1stDE-fourier}, take $C_{G, \d}$
  large enough so that for any $T \ge C_{G, \d} n$,
  \[ \frac{1}{n^{1/8}} + e^{-c_G T/n}\sqrt{d_\r} \le \d. \]
  Then, Lemma \ref{Lem:1stDE-fourier} and Plancherel's formula yield
  \begin{align*}
    \Pr_\s\left(\s_T \notin \Sast{\d} \right) &\le \Pr_\s\left( \|x_\r(T)\|_{\HS} \ge \d \text{ for some $\r \in G^\ast$} \right) \\
    &\le \frac{\Qc}{n^3} \le \frac{1}{n},
  \end{align*}
  for large enough $n$, as desired.
\end{proof}

We are now left with proving Lemma \ref{Lem:1stDE-fourier}, which
relies on the following bound on the operator norm.

\begin{lemma} \label{Lem:gap}
There exists a positive constant $\g_G$ depending on $G$ such that for
  any $\r \in G^\ast$ and any $\s \in \Snon{1/6}$,
    \[ \|I_{d_\r}+X_\r(\s)\|_{op}  \le 1-\frac{\g_G}{n}. \]
\end{lemma}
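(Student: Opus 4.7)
The plan is to split the operator as
\[
I_{d_\r} + X_\r(\s) = \left(1 - \tfrac{1}{n}\right) I_{d_\r} + \tfrac{1}{n-1}\, A_\r(\s), \qquad A_\r(\s) := \tfrac{1}{2}\bigl(x_\r(\s) + x_\r(\s)^{\ast}\bigr),
\]
so that $A_\r(\s)$ is Hermitian with eigenvalues in $[-1,1]$ (it is a convex combination of the Hermitian contractions $\tfrac{1}{2}(\r(a) + \r(a)^{\ast})$). I will reduce the lemma to a uniform spectral gap: there is a constant $\e_G > 0$, depending only on $G$, with $\lambda_{\max}(A_\r(\s)) \le 1 - \e_G$ for every non-trivial irreducible $\r \in G^{\ast}$ and every $\s \in \Snon{1/6}$. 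Once this is in hand, a direct calculation gives $\lambda_{\max}(I_{d_\r} + X_\r) \le 1 - \tfrac{1}{n} + \tfrac{1 - \e_G}{n-1} \le 1 - \g_G/n$ for $n$ large with $\g_G := \e_G/2$, while $\lambda_{\min}(I_{d_\r} + X_\r) \ge 1 - 2/n$ is safely away from $-(1 - \g_G/n)$.

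The starting point for the spectral gap is the unitarity identity $(I - \r(a))^{\ast}(I - \r(a)) = 2I - \r(a) - \r(a)^{\ast}$, which gives
\[
I_{d_\r} - A_\r(\s) = \tfrac{1}{2}\sum_{a \in G}\tfrac{n_a(\s)}{n}\,(I - \r(a))^{\ast}(I - \r(a)).
\]
Evaluating at a unit vector $v \in V_\r$ turns the spectral-gap claim into the inequality
\[
\tfrac{1}{2}\sum_{a \in G}\tfrac{n_a(\s)}{n}\,\|v - \r(a)v\|^2 \;\ge\; \e_G,
\]
which must hold uniformly in $v$, in $\s \in \Snon{1/6}$, and in $n$.

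I would prove this uniform lower bound by compactness and contradiction. If the infimum were $0$, there would exist $\s_k \in \Snon{1/6}$ of lengths $n_k$ and unit vectors $v_k \in V_\r$ with $\sum_a (n_a(\s_k)/n_k)\|v_k - \r(a)v_k\|^2 \to 0$. Passing to a subsequence, the proportion vectors $\mu^{(k)} := (n_a(\s_k)/n_k)_{a \in G}$ (living in the compact probability simplex on $G$) converge to some $\mu$, and $v_k \to v$ on the unit sphere. The condition ``$\sum_{a \notin H}\mu_a \ge 1/6$ for every proper subgroup $H \subsetneq G$'' is closed, hence inherited by $\mu$. The limiting identity $\sum_a \mu_a \|v - \r(a)v\|^2 = 0$ forces $\r(a)v = v$ for every $a \in \mathrm{supp}(\mu)$, so the stabilizer $H_v := \{a \in G : \r(a)v = v\}$ contains $\mathrm{supp}(\mu)$. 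Non-triviality and irreducibility of $\r$ preclude a non-zero globally fixed vector (which would provide a one-dimensional subrepresentation isomorphic to the trivial one), so $H_v$ is a proper subgroup; but then the inherited non-confinement applied to $H_v$ gives $\sum_{a \notin H_v}\mu_a \ge 1/6 > 0$, contradicting $\mathrm{supp}(\mu) \subseteq H_v$. This yields $\e_\r > 0$ for each fixed $\r$, and since $G^{\ast}$ is finite, $\e_G := \min_{\r \in G^{\ast}} \e_\r > 0$ depends only on $G$.

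The main obstacle is the final step. One cannot hope to exhibit a cheap elementary inequality, because as $v$ varies the stabilizer $H_v$ may jump discontinuously and individual quantities $\|v - \r(a)v\|^2$ can be made arbitrarily small for some $a \notin H_v$ by letting $v$ approach a larger fixed-point subspace; thus simply fixing one proper $H$ and using $\Snon{1/6}$ does not produce a uniform positive constant. The compactness argument is precisely what converts the pointwise positivity ``$\sigma \in \Sc$ means the $\r(a)$ have no common fixed vector'' into a uniform bound by producing a limiting $(\mu, v)$ where the failure becomes a sharp algebraic contradiction with irreducibility of $\r$.
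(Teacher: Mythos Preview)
Your proposal is correct and follows essentially the same route as the paper: both reduce to a uniform upper bound $\lambda_{\max}\bigl(\tfrac{1}{2}(x_\r+x_\r^\ast)\bigr)\le 1-\e_G$ by a compactness argument on the probability simplex $\Delta_G(1/6)$, using that a witness to failure would force the support of the limiting measure into the stabilizer $\{a:\r(a)v=v\}$, which is proper since a non-trivial irreducible representation has no non-zero fixed vector. The only cosmetic difference is that the paper phrases the compactness via continuity of $\mu\mapsto\lambda(\mu)$ on the compact set $\Delta_G(1/6)$ rather than via sequences, and your lower bound $\lambda_{\min}(I+X_\r)\ge 1-2/n$ should read $1-\tfrac{2}{n-1}$, which of course makes no difference.
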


\begin{proof}
  Let $\Delta_G$ denote the set of all probability distributions on
  $G$, and for $c \in (0, 1)$, let $\Delta_G(c) \subset \Delta_G$
  denote the set of all probability distributions $\mu$ such that
  $\mu(H) \le 1 - c$ for all proper subgroups $H \subset G$.

  Consider a representation $\r \in G^\ast$, and consider the function
  $h : \Delta_G(1/6) \to \End_\C(V_\r)$ given by
  \[ h(\mu) = \sum_{a \in G} \mu(a) \frac{\r(a)+\r(a)^\ast}{2}. \]
  Then, $h(\m)$ is hermitian, and since $\r$ is unitary, we clearly have
  \[\l(\m):=\max_{v \in V_\r, \|v\|=1}\langle h(\m)v, v\rangle \le 1. \]
  We claim that $\l(\m) < 1$ for each $\m \in \Delta_G(c)$.
  Indeed, suppose the contrary.
  Then, there exists a non-zero vector $v \in V_\r$ such that $\Re \langle \r(a)v, v \rangle=1$ for all $a \in G$ with $\m(a)>0$.
  This implies that the support of $\m$ is included in the subgroup
  \[H=\{a \in G \ : \ \r(a)v=v\}.\]
  Since $\r$ is a (non-trivial) irreducible representation, $H$ is a
  proper subgroup of $G$, and thus $\m(H) \le 1- c$, contradicting the
  assumption that $\m \in \Delta_G(c)$.

  Note that $\m \mapsto \l(\m)$ is continuous.
  We may define
  \[\g_\r:=\max_{\m \in \Delta_G(1/6)}\l(\m) < 1
  \qquad\text{and}\qquad \tilde \g_G:=\max_{\r \in G^\ast}\g_\r<1.\]
  Then, we have for any $\s \in \Snon{1/6}$,
  \[ \frac{x_\r(\s) + x_\r(\s)^\ast}{2} = \sum_{a \in G} \frac{n_a(\s)}{n}\frac{\r(a)+\r(a)^\ast}{2}
  \preceq \tilde \g_G I_{d_\r}. \]
  Taking $0<\g_G < 1-\tilde \g_G$, and
  plugging this into the definition of $X_\r$ gives
  $X_\r(\s) \preceq -\frac{\g_G}{n}I_{d_\r}$.
  Note that $X_\r(\s) \succeq -\frac{2}{n-1}I_{d_\r}$.
  Combining these together gives the result.
\end{proof}

\begin{remark}
  A much more direct approach is possible in the case $G = \Z/q$. The
  condition $\s \in \Snon{1/6}$ implies that $n_0(\s) \le
  \frac{5}{6}$. Then, we have
  \[ \Re x_\ell(\s) := \Re \sum_{a = 0}^{q - 1} \frac{n_a(\s)}{n} \o^{a \ell} \le \frac{5}{6} + \frac{1}{6} \max_{1 \le a \le q - 1} \Re \o^{a \ell} = \frac{5}{6} + \frac{1}{6} \cos \frac{2\pi}{q} < 1 - \g_G \]
  for some positive $\g_G$. Some rearranging of equations then yields
  the desired result.
\end{remark}

\begin{proof}[Proof of Lemma \ref{Lem:1stDE-fourier}]
  Fix $\r \in G^\ast$. Let $\Gc_t$ denote the event where for all $0
  \le s \le t$, we have $\|I_{d_\r}+X_\r(s)\|_{op} \le 1 - \frac{\g_G}{n}$,
  where $\g_G$ is taken as in Lemma \ref{Lem:gap}. Since our chain
  starts at $\s \in \Snon{1/3}$, Lemmas \ref{Lem:Burn} and
  \ref{Lem:gap} together imply that
  \[ \Pr_\s(\Gc_{n^2}^{\sf c}) \le C_G n^2 e^{-n/10}. \]
  Next, we turn to \eqref{Eq:DEfourier}. Rearranging
  \eqref{Eq:DEfourier} and squaring, we have
  \begin{align}
    \|x_\r(t+1)\|_{\HS}^2 &= \|x_\r(t)(I_{d_\r} + X_\r(t))\|_{\HS}^2 + \|\wh{M}_\r(t+1)\|_{\HS}^2 \nonumber \\
    &\hphantom{==} + 2\Re \langle x_\r(t)(I_{d_\r} + X_\r(t)), \wh{M}_\r(t+1) \rangle_{\HS} \label{Eq:1stDEsquared}
  \end{align}

  Let $z_t := \1_{\Gc_t} \|x_\r(t)\|_{\HS}^2$ and
  \[ M'(t+1) := \|\wh{M}_\r(t+1)\|_{\HS}^2 + 2\Re \langle x_\r(t)(I_{d_\r} + X_\r(t)), \wh{M}_\r(t+1) \rangle_{\HS}. \]
  Substituting into \eqref{Eq:1stDEsquared}, we obtain
  \[ z_{t+1} \le \|I_{d_\r} + X_\r(t)\|_{op}^2 \cdot z_t +  \1_{\Gc_t} M'(t+1) \le \left(1 - \frac{\g_G}{n}\right)^2 z_t +  \1_{\Gc_t} M'(t+1). \]
  Note that we have the bounds
  \[ \E[ M'(t+1) \mid \Fc_t ]= \E[ \|\wh{M}_\r(t+1)\|_{\HS}^2 \mid \Fc_t ] \le \frac{4\Qc^2d_\r}{n^2} \]
  \[ |M'(t+1)| \le \|\wh{M}_\r(t+1)\|_{\HS}^2 + 2\sqrt{d_\r}\left(1+\frac{1}{n(n-1)}\right) \|\wh{M}_\r(t+1)\|_{\HS} \le \frac{6\Qc^2 d_\r}{n}. \]

  We now apply Lemma \ref{Lem:GenDE} with $\e = \frac{1}{n}$, $\f(t) =
  \g_G$, $D = 6\Qc^2 d_\r$, and $\lambda = n^{1/4}$. This yields
  \[ \Pr\left( z_t \ge n^{-1/4} + e^{-\g_G t/n}\cdot z_0 \right) \le C'_G e^{-c'_G \sqrt{n}}. \]
  Consequently,
  \[ \Pr_\s\left( \|x_\r(t)\|_{\HS} \ge n^{-1/8} + e^{-\g_G t/2n} \cdot \|x_\r(0)\|_{\HS} \right) \le C'_G e^{-c'_G \sqrt{n}} + C_G n^2 e^{-n/10}. \]
  The lemma with $c_G = \g_G/2$ then follows from union bounding over
  all $1 \le t \le n^2$ and taking $n$ sufficiently large.
\end{proof}

\subsection{Proportion matrix chain: Proof of Lemma \ref{Lem:2ndDE}}\label{Subsec:matrix}

We carry out a similar albeit more refined strategy to analyze the
proportion matrix. Throughout this section, we assume our Markov chain
$(\s_t)_{t \ge 0}$ starts at an initial state $\s_\ast \in
\Sast{\frac{1}{4\Qc}}$. We again write $n_a(t)=n_a(\s_t)$ and
$n_{a, b}(t)=n^{\s_\ast}_{a, b}(\s_t)$, and similar to before, the
$n_{a, b}(t)$ satisfy the difference equation
\begin{equation}\label{Eq:de2}
n_{a, b}(t+1)-n_{a, b}(t)=\sum_{c \in G}\frac{n_{a, bc^{-1}}(t)n_c(t)}{2n(n-1)}+\sum_{c \in G}\frac{n_{a, bc}(t)n_c(t)}{2n(n-1)} - \frac{n_{a, b}(t)}{n} + M_{a, b}(t+1),
\end{equation}
where $\E[M_{a, b}(t+1) \mid \Fc_t]=0$ and $|M_{a, b}(t)| \le 2$ for all $t \ge 0$.

We can again analyze this equation via the Fourier transform. In this
case, for each $a \in G$, we take the Fourier transform of
$\left(n_{a, b}(t)/n_a(\s_\ast)\right)_{b \in G}$. For $\r \in G^\ast$,
let
\[ y_{a,\r}(t) = y_{a,\r}^{\s_\ast}(t) := \sum_{b \in G}\frac{n_{a, b}(t)}{n_a(\s_\ast)}\r(b) \]
denote the Fourier coefficient at $\r$. Let $\wh{M}_{a, \r}(t) :=
\frac{1}{n_a(\s_\ast)}\sum_{b \in G}M_{a, b}(t)\r(b)$. Then, \eqref{Eq:de2} becomes
\begin{equation}\label{Eq:DEy}
y_{a, \r}(t+1) - y_{a, \r}(t) = y_{a, \r}(t) X_\r(t) + \wh{M}_{a, \r}(t+1).
\end{equation}
Note that $\E_\s[\wh{M}_{a, \r}(t+1) \mid \Fc_t]=0$. Also, since we
assumed $\s_\ast \in \Sast{\frac{1}{4\Qc}}$, it follows that $\frac{n_a(\s_\ast)}{n}
\ge \frac{1}{2\Qc}$. Thus, we also know
$\|\wh{M}_{a, \r}(t+1)\|_{\HS} \le \frac{4\Qc^2\sqrt{d_\r}}{n}$.

Again, our main step is a bound on the Fourier coefficients
$y_{a, \r}(t)$, which will also be useful later in proving Lemma
\ref{Lem:RW}.

\begin{lemma} \label{Lem:2ndDE-fourier}
  Consider any $\s_\ast, \s'_\ast \in \Sast{\frac{1}{4\Qc}}$. There
  exist constants $c_G, C_G > 0$ depending only on $G$ such that for
  all large enough $n$, we have
  \[ \Pr_{\s'_\ast}\left( \|y^{\s_\ast}_{a, \r}(t)\|_{\HS} \ge R \left(\frac{1}{\sqrt{n}} + e^{-t/n} \|y^{\s_\ast}_{a, \r}(0)\|_{\HS} \right) \right) \le e^{-\O_G(R^2) + O_G(1)} + \frac{2}{n^3} \]
  for all $t$ and $R > 0$.
\end{lemma}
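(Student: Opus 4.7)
The plan is to iterate the recurrence \eqref{Eq:DEy} into
\[ y_{a,\r}(t) \;=\; y_{a,\r}(0)\, A(t) \;+\; \sum_{s=1}^{t} \wh M_{a,\r}(s)\, B_s(t), \]
where $A(t) := \prod_{s=0}^{t-1}(I_{d_\r} + X_\r(s))$ and $B_s(t) := \prod_{r=s}^{t-1}(I_{d_\r} + X_\r(r))$, and to bound the two contributions separately. On a good event I will show $\|A(t)\|_{op} \le C_G e^{-t/n}$, while the noise term, after tilting by $A(t)^{-1}$, becomes a pure martingale amenable to Pinelis' Hilbert-space Azuma-Hoeffding inequality, giving Gaussian concentration at the scale $1/\sqrt{n}$. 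This detour through a tilted martingale is necessary because a direct application of Lemma \ref{Lem:GenDE} to $\|y_{a,\r}(t)\|_{\HS}^2$ only produces noise at the weaker scale $\sim 1/n^{1/4}$.

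The control of $\|I_{d_\r} + X_\r(s)\|_{op}$ comes from the Hermitian structure of $X_\r(s)$: its eigenvalues take the form $-1/n + \l_j/(n-1)$ with $|\l_j| \le \xi(s) := \|x_\r(s)\|_{op}$, which gives
\[ \|I_{d_\r} + X_\r(s)\|_{op} \le 1 - 1/n + O(\xi(s)/n), \qquad \|(I_{d_\r} + X_\r(s))^{-1}\|_{op} \le 1 + 1/n + O(\xi(s)/n). \]
Since $\s'_\ast \in \Sast{1/(4\Qc)} \subseteq \Snon{1/4}$, essentially the same birth-and-death argument as in Lemma \ref{Lem:Burn}, together with Lemma \ref{Lem:1stDE-fourier}, yields a good event $\Gc$ of probability at least $1 - 1/n^3$ on which $\s_s \in \Snon{1/6}$ and $\xi(s) \le n^{-1/8} + e^{-c_G s/n}\|x_\r(0)\|_{\HS}$ for all $s \le n^2$. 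In particular $\sum_{s \ge 0} \xi(s)/n = O_G(1)$, so on $\Gc$ one obtains $\|A(t)\|_{op} \le C_G e^{-t/n}$ and $\|A(s)^{-1}\|_{op} \le C_G e^{s/n}$.

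Now define the tilted process $\tilde y(t) := y_{a,\r}(t)\, A(t)^{-1}$. Since $A(t+1) = A(t)(I_{d_\r} + X_\r(t))$ is $\Fc_t$-measurable, a direct calculation gives $\tilde y(t+1) - \tilde y(t) = \wh M_{a,\r}(t+1)\, A(t+1)^{-1}$, so
\[ \tilde y(t) - y_{a,\r}(0) \;=\; \sum_{s=1}^{t} \wh M_{a,\r}(s)\, A(s)^{-1} \]
is a Hilbert-space valued martingale. Using $n_a(\s_\ast) \ge 3n/(4\Qc)$ from $\s_\ast \in \Sast{1/(4\Qc)}$, the $s$-th increment has Hilbert-Schmidt norm at most $C_G e^{s/n}/n$ on $\Gc$, so the sum of squared increment norms is $O_G(e^{2t/n}/n)$. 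After stopping at the first exit from $\Gc$ to make the increment bound almost sure, Pinelis' Hilbert-space Azuma-Hoeffding inequality yields $\|\tilde y(t) - y_{a,\r}(0)\|_{\HS} \le R\, e^{t/n}/\sqrt{n}$ with probability at least $1 - 2 e^{-R^2/(2\alpha_G)}$ for some $\alpha_G > 0$.

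On $\Gc$ together with the concentration event,
\[ \|y_{a,\r}(t)\|_{\HS} \;\le\; \|\tilde y(t)\|_{\HS}\, \|A(t)\|_{op} \;\le\; C_G e^{-t/n}\|y_{a,\r}(0)\|_{\HS} + C_G R/\sqrt{n}, \]
which for $R \ge C_G$ is at most $R(1/\sqrt{n} + e^{-t/n}\|y_{a,\r}(0)\|_{\HS})$, with total failure probability bounded by $1/n^3 + 2 e^{-R^2/(2\alpha_G C_G^2)} \le 2/n^3 + e^{-\Omega_G(R^2) + O_G(1)}$. I expect the main obstacles to be (i) obtaining the sharp decay rate $e^{-t/n}$ rather than $e^{-c_G t/n}$ for some $c_G < 1$, which forces the spectral analysis of the Hermitian $I + X_\r$ and the uniform estimate $\sum_s \xi(s)/n = O_G(1)$ inherited from the proportion vector stage, and (ii) achieving Gaussian concentration at the correct $1/\sqrt{n}$ scale, which the tilted-martingale plus Pinelis route handles but which a direct $\|y_{a,\r}\|_{\HS}^2$ supermartingale argument cannot.
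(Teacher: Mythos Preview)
Your approach is correct and takes a genuinely different route from the paper's. The paper does not tilt by $A(t)^{-1}$ or invoke a Hilbert-space concentration inequality; instead it introduces a smooth norm-surrogate $\theta_n(x) = \|x\| + n^{-1/2}e^{-\sqrt{n}\|x\|} - n^{-1/2}$ (Lemma \ref{Lem:theta}), uses its convexity together with the Hessian bound $\nabla^2\theta_n \preceq \sqrt{n}\,I$ to turn \eqref{Eq:DEy} into a real-valued recursion
\[
\theta_n\bigl(y_{a,\r}(t+1)\bigr) \;\le\; \|I_{d_\r} + X_\r(t)\|_{op}\,\theta_n\bigl(y_{a,\r}(t)\bigr) + M'(t+1)
\]
with $\E[M'(t+1)\mid\Fc_t] = O_G(n^{-3/2})$ and $|M'| = O_G(n^{-1})$, and then feeds this into Lemma \ref{Lem:GenDE} with the same $\f(t)$ that you implicitly use. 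Both strategies solve the obstacle you correctly diagnose---that squaring the norm only produces noise at scale $n^{-1/4}$---but by different devices: the paper regularizes the norm so that scalar Azuma--Hoeffding applies at the right scale, whereas you keep the full vector, linearize via the predictable cocycle $A(t)$, and appeal to Pinelis. Your route is conceptually cleaner (no ad hoc $\theta_n$), at the price of a less elementary concentration tool; the paper's route stays entirely within the scalar supermartingale machinery already set up in Lemma \ref{Lem:GenDE}.

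Two small points to tighten. First, your claim $\sum_{s}\xi(s)/n = O_G(1)$ uses the bound $\xi(s) \le n^{-1/8} + e^{-c_G s/n}\sqrt{d_\r}$ from Lemma \ref{Lem:1stDE-fourier}, and the $n^{-1/8}$ contribution sums to $t\,n^{-9/8}$, which is $O_G(1)$ only for $t = O(n^{9/8})$. The paper's proof shares exactly this slack (its estimate $\int_0^t \f(s)\,ds \ge t - O_G(n)$ has the same hidden restriction), and in both cases it is harmless for the applications: Lemma \ref{Lem:2ndDE} uses $t = \lceil \tfrac{1}{2}n\log n\rceil$, and the use in Lemma \ref{Lem:RW} only needs the far weaker threshold $1/(5\Qc^3)$. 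Second, note that $I_{d_\r} + X_\r(s)$ is \emph{always} invertible (its smallest eigenvalue is at least $1 - 1/n - 1/(n-1) > 0$), so $\tilde y(t)$ is globally well-defined; the stopping is needed only to enforce the sharp increment bound $\|A(s)^{-1}\|_{op} \le C_G e^{s/n}$, not to make sense of the tilting.
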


The above lemma directly implies Lemma \ref{Lem:2ndDE}.

\begin{proof}[Proof of Lemma \ref{Lem:2ndDE}]
  We apply Lemma \ref{Lem:2ndDE-fourier} to each $a \in G$ and $\r \in
  G^\ast$. Recall that $T = \ceil{\frac{1}{2} n \log n}$, so that
  \[ \frac{1}{\sqrt{n}} + e^{-T/n} \|y^{\s_\ast}_{a, \r}(0)\|_{\HS} \le \frac{2\sqrt{d_\r}}{\sqrt{n}}. \]
  Then, Lemma \ref{Lem:2ndDE-fourier} implies
  \[ \Pr_{\s'_\ast}\left( \|y^{\s_\ast}_{a, \r}(T)\|_{\HS} \ge \frac{R}{\sqrt{n}} \right) \le e^{-\O_G(R^2) + O_G(1)} + \frac{2}{n^3}. \]
  Union bounding over all $a \in G$ and $\r \in G^\ast$ and using the
  Plancherel formula, this yields
  \begin{align*}
    &\Pr_{\s'_\ast}\left( \s_\ast \not\in \Sast{\s_\ast, \frac{R}{\sqrt{n}}} \right) \le \Pr_{\s'_\ast}\left( \max_{a, \r} \|y^{\s_\ast}_{a, \r}(T)\|_{\HS} \ge \frac{R}{\sqrt{n}} \right) \\
    &\qquad\qquad \le e^{-\O_G(R^2) + O_G(1)} + \frac{2 \Qc^2}{n^3} \le C_G e^{-R} + \frac{1}{n}
  \end{align*}
  for sufficiently large $C_G$ and $n$.
\end{proof}

We now prove Lemma \ref{Lem:2ndDE-fourier}. Before proceeding with the
main proof, we need the following routine estimate as a preliminary
lemma.

\begin{lemma} \label{Lem:theta}
  Let $\theta_n : \R^d \to \R^+$ be the function given by $\theta_n(x)
  = \|x\| + \frac{1}{\sqrt{n}}e^{-\sqrt{n}\|x\|} -
  \frac{1}{\sqrt{n}}$. Then, we have the inequalities
  \[ \|\nabla \theta_n(x)\| \le 1, \qquad \theta_n(x + h) \le \theta_n(x) + \langle h, \nabla \theta_n(x) \rangle + \frac{\sqrt{n}}{2} \|h\|^2. \]
\end{lemma}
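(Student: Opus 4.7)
The plan is to exploit the fact that $\theta_n$ is radial: writing $\theta_n(x) = g(\|x\|)$ with the one-variable profile
\[ g(r) := r + \frac{1}{\sqrt{n}} e^{-\sqrt{n}\, r} - \frac{1}{\sqrt{n}}, \]
both claims reduce to estimating $g'$ and $g''$. Note that $g$ has been engineered so that $g(0)=0$ and $g(r) = \frac{\sqrt{n}}{2}r^2 - \frac{n r^3}{6} + O(n^{3/2} r^4)$ near zero, so $\theta_n$ is in fact $C^\infty$ (including at the origin), which frees us from worrying about non-smoothness of the Euclidean norm at $0$.

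For the gradient bound, compute $g'(r) = 1 - e^{-\sqrt{n}\, r}$, which lies in $[0,1)$ for $r \ge 0$. For $x\neq 0$, the chain rule gives $\nabla \theta_n(x) = g'(\|x\|) \cdot x/\|x\|$, whence $\|\nabla \theta_n(x)\| = g'(\|x\|) \le 1$. At $x=0$, the expansion above shows $\nabla \theta_n(0) = 0$. This proves the first inequality.

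For the quadratic inequality, I would show the stronger pointwise Hessian bound $0 \preceq \nabla^2 \theta_n(x) \preceq \sqrt{n}\, I$ and then integrate. Writing $u = x/\|x\|$ for $x \neq 0$, one has the standard decomposition
\[ \nabla^2 \theta_n(x) = g''(\|x\|)\, u u^\top + \frac{g'(\|x\|)}{\|x\|} \bigl(I - u u^\top\bigr), \]
so the eigenvalues of the Hessian are $g''(r) = \sqrt{n}\, e^{-\sqrt{n}\, r}$ (along $u$) and $g'(r)/r = (1 - e^{-\sqrt{n}\, r})/r$ (on $u^\perp$). The first is clearly in $[0,\sqrt{n}]$; the second is in $[0,\sqrt{n}]$ by the elementary inequality $1-e^{-t} \le t$ applied at $t = \sqrt{n}\, r$. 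Both extend continuously to $r = 0$ (where the latter tends to $\sqrt{n}$), matching the direct Taylor expansion at the origin. Hence $\nabla^2 \theta_n \preceq \sqrt{n}\, I$ globally, and applying the fundamental theorem of calculus twice to $t \mapsto \theta_n(x + t h)$ yields
\[ \theta_n(x+h) - \theta_n(x) - \langle h, \nabla \theta_n(x)\rangle = \int_0^1 (1-t)\, h^\top \nabla^2 \theta_n(x+t h)\, h\, dt \le \frac{\sqrt{n}}{2}\|h\|^2, \]
as required.

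There is no real obstacle here; the estimate is entirely routine once one reduces to the radial profile $g$. The only mildly delicate point is verifying smoothness and the Hessian bound at the origin, which is handled by the Taylor expansion $\theta_n(x) = \frac{\sqrt{n}}{2}\|x\|^2 + O(n \|x\|^3)$ and the continuity of $g'(r)/r$ at $r=0$.
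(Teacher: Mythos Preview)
Your proof is correct and follows essentially the same approach as the paper: reduce to the radial profile $g$ (the paper calls it $f$), bound $g'$ to get the gradient estimate, compute the Hessian eigenvalues $g''(r)$ and $g'(r)/r$ and bound both by $\sqrt{n}$, then invoke Taylor's theorem. Your version is slightly more careful about smoothness at the origin and makes the Taylor remainder explicit via the integral formula, but there is no substantive difference.
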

\begin{proof}
  We can write $\theta_n(x) = f(\|x\|)$, where $f(r) = r +
  \frac{1}{\sqrt{n}} e^{-\sqrt{n} r} - \frac{1}{\sqrt{n}}$. By
  spherical symmetry, we have
  \[ \|\nabla \theta_n(x)\| = f'(\|x\|) = 1 - e^{-\sqrt{n}\|x\|} \le 1, \]
  which is the first inequality. Again by spherical symmetry, the
  eigenvalues of the Hessian $\nabla^2 \theta_n(x)$ can be directly
  computed to be $f''(\|x\|)$ and $f'(\|x\|) / \|x\|$. But these are
  bounded by
  \[ f''(r) \le \sqrt{n} e^{-\sqrt{n}r} \le \sqrt{n}, \qquad f'(r)/r \le \frac{1 - e^{-\sqrt{n}r}}{r} \le \sqrt{n}. \]
  Thus, $\nabla^2 \theta_n(x) \preceq \sqrt{n} I$, and the second inequality
  follows from Taylor expansion.
\end{proof}

\begin{proof}[Proof of Lemma \ref{Lem:2ndDE-fourier}]
  Let $\g_G$ and $c_G$ be the constants from Lemmas \ref{Lem:gap} and
  \ref{Lem:1stDE-fourier}, respectively. Define the events
  \[ \Gc_t := \bigcap_{s = 0}^t \left\{ X_\r(\s_s) \preceq -\frac{\g_G}{n} \right\}, \qquad \Gc'_t := \bigcap_{s = 0}^t \left\{ X_\r(\s_s) \preceq -\frac{1 - \sqrt{d_\r} e^{-c_G s/n} - 2n^{-1/8}}{n} \right\}. \]

  Note that $\s'_\ast \in \Sast{\frac{1}{4\Qc}} \subseteq
  \Snon{1/3}$. Hence, by Lemmas \ref{Lem:Burn} and \ref{Lem:gap}, we
  have $\Pr(\Gc^{\sf c}_{n^2}) \le C_G n^2 e^{-n/10}$. We also have
  \begin{align*}
    X_\r(s) &= \frac{1}{n - 1} \left( \frac{x_\r(s)+x_\r(s)^\ast}{2} - \frac{n - 1}{n}I_{d_\r} \right) \preceq -\frac{1}{n}\left( 1 - \frac{n \|x_\r(s)\|_{\HS}}{n - 1} \right) I_{d_\r} \\
    &\preceq -\frac{1}{n}\left( 1 - \|x_\r(s)\|_{\HS} - \frac{\sqrt{d_\r}}{n-1} \right) I_{d_\r},
  \end{align*}
  where we have used the fact that
  $\left\|\frac{x_\r(s)+x_\r(s)^\ast}{2}\right\|_{op} \le \|x_\r(s)\|_{op} \le
  \|x_\r(s)\|_{\HS}$.

  Lemma \ref{Lem:1stDE-fourier} then implies that $\Pr(\Gc'^{\sf
    c}_{n^2}) \le \frac{1}{n^3}$. Thus, setting
  \[ \f(t) := \max(\g_G, 1 - \sqrt{d_\r} e^{-c_G t/n} - 2 n^{-1/8}), \]
  \[ \Hc_t := \Gc_t \cap \Gc'_t = \bigcap_{s = 0}^t \left\{ X_\r(\s_s) \preceq -\frac{\f(t)}{n} \right\}, \]
  we conclude that
  \[ \Pr(\Hc^{\sf c}_{n^2}) \le \Pr(\Gc^{\sf c}_{n^2}) + \Pr(\Gc'^{\sf c}_{n^2}) \le \frac{2}{n^3} \]
  for all large enough $n$.

  Next, we turn to \eqref{Eq:DEy} and apply $\theta_n$ to both sides,
  where we identify $\C^{d_\r^2}$ with $\R^{2d_\r^2}$. Using Lemma \ref{Lem:theta} and
  taking the conditional expectation, we obtain
  \begin{align*}
    \E\left[ \theta_n\left( y_{a, \r}(t+1) \right) \,\middle|\, \Fc_t \right] &\le \theta_n\left( y_{a, \r}(t) (I_{d_\r} + X_\r(t))\right) + \frac{8 \Qc^4 d_\r}{n \sqrt{n}} \\
    &\le \theta_n(\|I_{d_\r} + X_\r(t)\|_{op} \cdot y_{a, \r}(t)) + \frac{8 \Qc^4 d_\r}{n \sqrt{n}} \\
    &\le \|I_{d_\r} + X_\r(t)\|_{op} \cdot \theta_n(y_{a, \r}(t)) + \frac{8 \Qc^4 d_\r}{n \sqrt{n}},
  \end{align*}
  where the second inequality follows from the variational formula for
  operator norm (i.e. that $\|BA\|_{\HS} \le \|A\|_{op} \|B\|_{\HS}$),
  and the third inequality follows from the fact that $\theta_n$ is
  convex with $\theta_n(0) = 0$.  Thus, we may write
  \[ \theta_n(y_{a, \r}(t+1)) \le \|I_{d_\r} + X_\r(t)\|_{op} \cdot \theta_n(y_{a, \r}(t)) + M'(t+1) \]
  where
  \[ \E[ M'(t+1) \mid \Fc_t ] \le \frac{8 \Qc^4 d_\r}{n\sqrt{n}}, \qquad |M'(t+1)| \le \frac{8\Qc^2 \sqrt{d_\r}}{n}. \]

  Now, let $z_t := \1_{\Hc_t} \theta_n(y_{a, \r}(t))$, and note that
  since $X_\r(\s) \succeq -\frac{2}{n-1} I_{d_\r}$, we have
  $\|I_{d_\r} + X_\r(t)\|_{op} \le 1-\frac{\f(t)}{n}$ whenever $\Hc_t$
  holds. Thus,
  \[ z_{t+1} \le \|I_{d_\r} + X_\r(t)\|_{op} \cdot z_t + \1_{\Hc_t}M'(t+1) \le \left(1 - \frac{1}{n} \f(t) \right) z_t + \1_{\Hc_t}M'(t+1). \]

  We may then apply Lemma \ref{Lem:GenDE} with $\e = \frac{1}{n}$ and
  $D = 8\Qc^4 d_\r$. Note that
  \begin{align*}
    \int_0^t \f(s) \,ds &\ge \left(1 - 2 n^{-\frac{1}{8}}\right)t - \sqrt{d_\r} \int_0^\infty e^{-\frac{c_G s}{n}} \,ds \ge t - O_G(n)
  \end{align*}
  for all large enough $n$. Thus, Lemma \ref{Lem:GenDE} implies that
  \begin{equation} \label{Eq:2ndDE-fourier-z_t}
    \Pr\left(z_t \ge \frac{\lambda}{\sqrt{n}} + C_G e^{-t/n} \cdot z_0 \right) \le C'_G e^{-c'_G \lambda^2}.
  \end{equation}
  Consequently,
  \begin{align*}
    &\Pr\left( \|y_{a, \r}(t)\|_{\HS} \ge R\left( \frac{1}{\sqrt{n}} + e^{-\frac{t}{n}}\|y_{a, \r}(0)\|_{\HS} \right) \right) \\
    &\qquad\qquad \le \Pr\left( \theta_n(y_{a, \r}(t)) \ge \frac{R - 1}{\sqrt{n}} + Re^{-\frac{t}{n}}\|y_{a, \r}(0)\|_{\HS} \right) \\
    &\qquad\qquad \le \Pr\left( \theta_n(y_{a, \r}(t)) \ge \frac{R - 1}{\sqrt{n}} + Re^{-\frac{t}{n}}\theta_n(y_{a, \r}(0)) \right) \\
    &\qquad\qquad \le \Pr\left( z_t \ge \frac{R - 1}{\sqrt{n}} + Re^{-\frac{t}{n}}z_0 \right) + \Pr(\Hc^{\sf c}_{n^2}) \\
    &\qquad\qquad \le e^{-\O_G(R^2) + O_G(1)} + \frac{2}{n^3},
  \end{align*}
  as desired.
\end{proof}

\section{Construction of the coupling: Proof of Lemma \ref{Lem:RW}}\label{Sec:Coupling-Proof}

For each $\d>0$, we define a subset of $\{0, 1, \dots, n\}^{G \times
  G}$ by
\[ \Mc_\d:=\left\{ (n_{a, b})_{a, b \in G} \ : \ n_{a, b} \ge \frac{(1-\d) n}{\Qc^2} \ \text{for every $a, b \in G$ and}\ \sum_{a, b \in G}n_{a, b}=n\right\}. \]

\begin{lemma}\label{Lem:coupling}
  Consider a configuration $\s_\ast \in \Sc$ and a constant $0<\d \le
  \frac{1}{2\Qc^2}$, and assume that $(1 - \d)n/\Qc^2$ is an
  integer. Let $(\s_t)_{t \ge 0}$ and $(\tilde\s_t)_{t \ge 0}$ be two
  product replacement chains started at $\s$ and $\tilde\s$,
  respectively. Then, there exists a coupling $(\s_t, \tilde \s_t)$ of
  the Markov chains satisfying the following:

  Let
  \[ D_t:=\frac{1}{2}\sum_{a, b \in G}|n^{\s_\ast}_{a, b}(\s_t) - n^{\s_\ast}_{a, b}(\tilde \s_t)|. \]
  Then, on the event $\{(n^{\s_\ast}_{a, b}(\s_t))_{a, b \in G}, (n^{\s_\ast}_{a, b}(\tilde \s_t))_{a, b \in G} \in \Mc_\d\}$ and $\{D_t > 0\}$, one has
  \begin{align}
    \E_{\s, \tilde \s}[D_{t+1}-D_t \mid \s_t, \tilde \s_t] &\le 0, \label{Eq:D-drift-0} \\
    \Pr_{\s, \tilde \s}\left(D_{t+1} - D_t \neq 0 \mid \s_t, \tilde \s_t \right) &\ge \frac{(1-\d)^2}{4\Qc^3}. \label{Eq:D-fluctuate}
  \end{align}
\end{lemma}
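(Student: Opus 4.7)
The plan is to construct an explicit coupling of the two chains $(\s_t)$ and $(\tilde\s_t)$ for which $D_t$ behaves like a random walk on $\Z_{\ge 0}$ with non-positive drift and bounded-below probability of a non-zero step, as long as both proportion matrices remain in $\Mc_\d$ and $D_t > 0$. Two key resources are (i) the permutation invariance of the product replacement chain within each $\s_\ast$-class, which lets us freely replace $\tilde\s_t$ by a permuted version with the same proportion matrix, and (ii) the dense lower bound of $(1-\d)n/\Qc^2$ on every entry of both proportion matrices guaranteed by $\Mc_\d$, which in particular gives $n_g(\s_t), n_g(\tilde\s_t) \ge (1-\d)n/\Qc$ for every $g \in G$.

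First I would replace $\tilde\s_t$ by an optimally permuted version $\tilde\s'_t$, so that the set of matched sites $A := \{i : \s_t(i) = \tilde\s'_t(i)\}$ has maximum size $n - D_t$. The coupling then uses the same site $i$ and sign $\e \in \{\pm 1\}$ in both chains, and couples the multipliers $g := \s_t(j)^\e$ and $\tilde g := \tilde\s'_t(\tilde j)^\e$ maximally (so that $g = \tilde g$ with the largest feasible probability), subject to the marginal constraint that each chain individually samples $(i,j,\e)$ uniformly. A single coupled step then modifies four entries of $\D := N - \tilde N$ in the row $a := \s_\ast(i)$: two ``loss'' updates at positions $\s_t(i)$ and $\tilde\s'_t(i)$, and two ``gain'' updates at $\s_t(i)g$ and $\tilde\s'_t(i)\tilde g$.

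The analysis proceeds by case on whether $i \in A$ or $i \in B := [n]\setminus A$. When $i \in A$ and $g = \tilde g$, all four contributions cancel and $D_{t+1} = D_t$. When $i \in A$ and $g \neq \tilde g$, only the two gain contributions survive, and the change lies in $\{-1, 0, +1\}$. When $i \in B$, the optimality of the alignment forces $\s_t(i)$ to sit on a positive entry of $\D_{a,\cdot}$ and $\tilde\s'_t(i)$ on a negative one, so the two loss updates each reduce $|\D|$ by exactly $1$; even if both gain updates increase $|\D|$ maximally, the total change in $D_t$ lies in $\{-2, -1, 0\}$, so this case contributes non-positively outright. The only potential source of positive drift is the $i \in A$, $g \neq \tilde g$ case, whose probability is $O(D_t/n)$ under the maximal coupling; a refined choice of the multiplier coupling, using the uniform lower bound on $n_g(\s_t), n_g(\tilde\s_t)$ from $\Mc_\d$, is then used to show that this positive contribution is dominated by the negative one from $i \in B$, giving $\E[D_{t+1} - D_t \mid \s_t, \tilde\s_t] \le 0$.

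For the non-zero fluctuation bound, note that since $D_t > 0$ and each row of $\D$ has sum zero, there exist $a^*, b^*, b^{**} \in G$ with $\D_{a^*, b^*} > 0$ and $\D_{a^*, b^{**}} < 0$. I would isolate the event $\Ec$ that $\s_\ast(i) = a^*$, $\s_t(i) = b^*$, and the multiplier equals $(b^*)^{-1} b^{**}$; on $\Ec$ the loss at $(a^*, b^*)$ and the gain at $(a^*, b^{**})$ both move $\D$ strictly toward zero, producing a nontrivial change in $D_t$. Using the lower bounds $n^{\s_\ast}_{a^*, b^*}(\s_t) \ge (1-\d)n/\Qc^2$ and $n_g(\s_t) \ge (1-\d)n/\Qc$ coming from $\Mc_\d$, the probability of $\Ec$ is at least $(1-\d)^2/(4\Qc^3)$, yielding the claimed bound. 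The hardest step will be the refined coupling needed to make the drift pointwise non-positive in the $i \in A$, $g \neq \tilde g$ case: the straightforward maximal coupling of multipliers only gives drift that is $O(D_t/n)$ in absolute value, and achieving the non-positive sign pointwise likely requires additional use of the sign and partner-site flexibility to exploit the density of the proportion matrices.
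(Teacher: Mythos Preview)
Your proposal has two genuine gaps, one of which you flag yourself and one of which you do not.

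\textbf{The drift bound.} You correctly identify that the naive maximal coupling of multipliers only gives drift of order $O(D_t/n)$ with unspecified sign, and you defer the fix to an unspecified ``refined coupling''. This is the heart of the lemma, and your outline does not supply it. The paper's mechanism is quite different from what you describe: rather than trying to make the multipliers $g$ and $\tilde g$ agree as often as possible, when the \emph{target} index $k$ lies in the unmatched set (your $B$) and the \emph{multiplier} index $l$ lies in a fixed dense ``core'' $P \subset A$ of size $(1-\d)n$, the paper deliberately uses \emph{different} multiplier indices in the two chains, chosen via a permutation $\tau$ of $P$ so that the new values $\s_t(k)\cdot\s_t(l)^s$ and $\tilde\s_t(k)\cdot\tilde\s_t(\tau(l))^s$ coincide. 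This forces $D_{t+1} = D_t - 1$ deterministically on that event, which has probability $\frac{(1-\d)D_t}{n-1}$. The potential $+1$ contributions from the remaining cases (target in $P$, multiplier unmatched; or both indices outside $P$) are then bounded above by $\bigl(1 - \frac{2(1-\d)}{\Qc^2} + \d\bigr)\frac{D_t}{n-1}$, and the condition $\d \le \frac{1}{2\Qc^2}$ is exactly what makes the net drift $\le 0$. Your $i \in B$ case, by contrast, only gives change $\le 0$ with no guaranteed strictly negative part, so it cannot offset the positive $i \in A$, $g \ne \tilde g$ contribution.

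\textbf{The fluctuation bound.} Your argument for \eqref{Eq:D-fluctuate} is incorrect as stated. On the event $\Ec$ you describe---$\s_\ast(i)=a^*$, $\s_t(i)=b^*$, multiplier $g=(b^*)^{-1}b^{**}$---the overwhelming majority of such sites $i$ lie in the matched set $A$, and under your maximal coupling one has $\tilde g = g$ with probability $1 - O(D_t/n)$. But when $i \in A$ and $g = \tilde g$, \emph{both} chains decrement at $(a^*,b^*)$ and increment at $(a^*,b^{**})$, so $\Delta$ is unchanged and $D_{t+1}=D_t$. Your claim that ``the loss and gain both move $\Delta$ strictly toward zero'' looks at only one chain and forgets that the coupled $\tilde\s$-move cancels it. Thus $\Pr(\Ec) \ge (1-\d)^2/(4\Qc^3)$ does not translate into a lower bound on $\Pr(D_{t+1}\ne D_t)$. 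The paper handles this by a separate device: inside the case where \emph{both} indices lie in the core $P$ (so the identity coupling would give $D_{t+1}=D_t$), it carves out two equal-size subsets $S, S' \subset P \times P$ and swaps the $(k,l)$-pairs between the chains via a bijection $S \leftrightarrow S'$, artificially creating a symmetric $\pm 1$ change in $D_t$ with total probability $\frac{(1-\d)^2}{2\Qc^3}$. This manufactured fluctuation has zero mean, so it does not disturb the drift estimate, and it directly yields \eqref{Eq:D-fluctuate}.

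In short, the paper's coupling is not a maximal coupling of multipliers at all; it is a structured case analysis on the pair $(k,l)$ relative to a partition $P \sqcup Q \sqcup R$ of $[n]$, with two carefully chosen permutations of multiplier indices (one to force a decrease when $k$ is unmatched, one to inject unbiased fluctuation when both indices are in the core).
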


\begin{proof}
  Let us abbreviate $n_{a, b}(t) = n^{\s_\ast}_{a, b}(\s_t)$ and $\tilde
  n_{a, b}(t) = n^{\s_\ast}_{a, b}(\tilde \s_t)$. Let
  $m_{a, b}(t):=\min(n_{a, b}(t), \tilde n_{a, b}(t))$. For each $a \in G$, we
  define the quantity
  \[ d_a(t) := \frac{1}{2}\sum_{b \in G} |n_{a, b}(t) - \tilde{n}_{a, b}(t)| = \sum_{b \in G} n_{a, b}(t) - \sum_{b \in G} m_{a, b}(t), \]
  so that $D_t = \sum_{a \in G} d_a(t)$.

  For accounting purposes, it is helpful to introduce two sequences
  \[ (x_1, x_2, \ldots , x_n) \text{ and } (\tilde{x}_1, \tilde{x}_2, \ldots , \tilde{x}_n) \]
  of elements of $G \times G$. These sequences are chosen so that
  the number of $x_k$ equal to $(a, b)$ is exactly $n_{a, b}$, and
  similarly the number of $\tilde{x}_k$ equal to $(a, b)$ is
  $\tilde{n}_{a, b}$.  Moreover, we arrange their indices in a coordinated
  fashion, as described below.

  We define three families of disjoint sets: $P_{a, b}$, $Q_a$, and $R_a
  \subset [n]$.
  \begin{itemize}
  \item For each $a, b \in G$, let $P_{a, b}$ be a set of size $(1 - \delta)n/\Qc^2$ such that for any $k \in P_{a, b}$, we have $x_k
    =\tilde{x}_k = (a, b)$. (This is possible provided that $(n_{a, b}(t)), (\tilde n_{a, b}(t)) \in \Mc_\d$
    holds.)
  \item For each $a \in G$, let $Q_a$ be a set of size $\sum_{b \in G}(m_{a, b} - |P_{a, b}|)$ such that for any $k \in Q_a$, $x_k
    =\tilde{x}_k= (a, b)$ for some $b$. (Note that $Q_a$ may be empty.)
  \item For each $a \in G$, let $R_a$ be a set of size $d_a$ such
    that for any $k \in R_a$, $x_k$ and $\tilde{x}_k$ both have $a$ as
    their first coordinate. (This $R_a$ is well-defined since $\sum_b
    n_{a, b} = \sum_b \tilde n_{a, b}$ for each $a$; it may also be empty.)
  \end{itemize}
  Define
  \[
  P := \bigsqcup_{a, b \in G} P_{a, b}, \qquad Q := \bigsqcup_{a \in G} Q_a, \qquad R := \bigsqcup_{a \in G} R_a.
  \]
  Suppose that $D_t > 0$, so that for some $a_*, b_*, b_*' \in G$ we
  have $n_{a_*, b_*} > \tilde{n}_{a_*, b_*}$ and $n_{a_*, b'_*} <
  \tilde{n}_{a_*, b'_*}$. Let us consider all possible ways to sample a
  pair of indices and a sign $(k, l, s) \in \{1, 2, \ldots , n\}^2
  \times \{\pm 1\}$ with $k \neq l$.

  Suppose $x_k = (a_k, b_k)$ and $x_l = (a_l, b_l)$. We think of $(k, l,
  +1)$ as corresponding to a move on $(n_{a, b}(t))$ where $n_{a_k, b_k}$ is
  decremented and $n_{a_k, (b_k \cdot b_l)}$ is incremented. Similarly, $(k, l,
  -1)$ corresponds to a move where $n_{a_k, b_k}$ is decremented and
  $n_{a_k, (b_k \cdot b_l^{-1})}$ is incremented. We may also think of $(k, l, \pm
  1)$ as corresponding to moves on $(\tilde n_{a, b}(t))$ in an analogous way.

  \begin{figure}
    \includegraphics{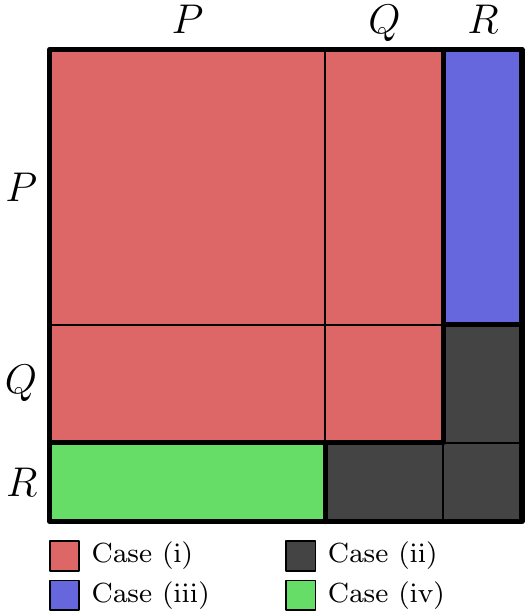}
    \caption{Illustration of cases (i) through (iv).}
    \label{fig:coupling-cases}
  \end{figure}

  We now analyze four cases, as illustrated in Figure \ref{fig:coupling-cases}.

  \paragraph{(i) {\bf Case $(k, l) \in (P \sqcup Q) \times (P \sqcup Q)$.}}

  For all but an exceptional situation described below, we apply the
  move corresponding to $(k, l, s)$ to both states $(n_{a, b}(t))$ and
  $(\tilde n_{a, b}(t))$. In these cases, $D_{t+1}=D_t$.

  We now describe the exceptional situation. Define
  \[ S = P_{a_*, b_*} \times \left(\bigsqcup_{c \in G} P_{c, (b_*^{-1} \cdot b'_*)}\right) \qquad\text{and}\qquad S' = P_{a_*, b'_*} \times \left(\bigsqcup_{c \in G} P_{c, \id}\right). \]
  Then, the exceptional situation occurs when $s = +1$ and $(k, l) \in
  S \sqcup S'$.

  Take any bijection $\tau$ from $S$ to $S'$. If $(k, l) \in S$, then
  we apply $(k, l, +1)$ to $(n_{a, b}(t))$ while applying $(\tau(k, l),
  +1)$ to $(\tilde n_{a, b}(t))$. This increments $n_{a_*, b'_*}$,
  decrements $n_{a_*, b_*}$, and has no effect on the
  $(\tilde{n}_{a, b}(t))$. The overall effect is that $D_{t+1} = D_t - 1$.

  If instead $(k, l) \in S'$, then we apply $(k, l, +1)$ to $(n_{a,
    b}(t))$ and $(\tau^{-1}(k, l), +1)$ to $(\tilde n_{a, b}(t))$. A
  similar analysis shows that in this case $D_{t+1} = D_t + 1$.

  The exceptional event occurs with probability $\frac{(1 -
    \d)^2}{2\Qc^3}$, and when it occurs, $D_t$ increases or decreases
  by $1$ with equal probability. Thus, the exceptional situation plays
  the role of introducing some unbiased fluctuation in $D_t$ and gives
  us \eqref{Eq:D-fluctuate}. \\

  \paragraph{(ii) {\bf Case $(k, l) \in (Q \sqcup R) \times (Q \sqcup R)$ but $(k, l) \not\in Q \times Q$.}}

  This occurs with probability
  \[ \frac{1}{n(n-1)}\left((|Q|+|R|)(|Q|+|R|-1)-|Q|(|Q|-1)\right) \]
  which is at most
  \[ \frac{2}{n(n - 1)}(|Q| + |R|) |R|  = \frac{2 \d}{n - 1}D_t. \]
  Apply the move corresponding to $(k, l, s)$ to both states.  This
  increases $D_t$ by at most $1$. We will see later that the effect of
  this case is small compared to the other cases. \\

  \paragraph{(iii) {\bf Case $(k, l) \in P \times R$.}}

  This occurs with probability
  \[ \frac{1}{n(n - 1)}|P| |R| =\frac{1 - \d}{n - 1}D_t. \]
  Apply the move corresponding to $(k, l, s)$ to both states. Again,
  this increases $D_t$ by at most $1$, but there is also a chance not
  to increase.

  Suppose that $x_l = (a_1, b_1)$ and $\tilde{x}_l = (a_1,
  \tilde{b}_1)$, and suppose that $k \in P_{a_2, b_2}$. Then the move has
  the effect of decreasing $n_{a_2, b_2}$ and $\tilde{n}_{a_2, b_2}$ while
  increasing $n_{a_2, (b_2 \cdot b_1^s)}$ and $\tilde{n}_{a_2,
    (b_2\cdot \tilde{b}_1^s)}$.
  Note that conditioned on this case happening, $(a_2, b_2)$ is
  distributed uniformly over $G \times G$. When $(a_2,
  (b_2\cdot \tilde{b}_1^s)) = (a_*, b_*)$ or $(a_2,
  (b_2\cdot b_1^s)) = (a_*, b'_*)$, the move does not increase
  $D_t$. Therefore there is at least a $2/\Qc^2$ chance that $D_t$ is
  actually not increased. Hence, the probability that $D_t$ is
  increased by $1$ is at most
  \[ \left(1 - \frac{2}{\Qc^2}\right)\frac{1 - \d}{n-1}D_t. \]

  \paragraph{(iv) {\bf Case $(k, l) \in R \times P$.}}

  This occurs with probability
  \[ \frac{1}{n(n - 1)} |R| |P| = \frac{1 - \d}{n - 1} D_t. \]
  Suppose that $x_k = (a, b)$ and $\tilde{x}_k = (a, \tilde{b})$.  Let
  $\tau$ be a permutation of $P$ such that for $l \in P_{a, c}$, one has
  $\tau(l) \in P_{a, \tilde{b}^{-1}\cdot b \cdot c^s}$.  Then apply $(k, l, s)$ to $(n_{a, b}(t))$
  and apply $(k, \tau(l), s)$ to $(\tilde n_{a, b}(t))$.  This always decreases
  $D_t$ by $1$. \\

  Let us now summarize what we know when $(n_{a, b}(t)), (\tilde n_{a, b}(t)) \in \Mc_\d$ and $D_t>0$.
  From Cases (i), (ii), and (iii), we have
  \[ \Pr_{\s, \tilde \s}(D_{t+1} = D_t + 1 \mid \s_t, \tilde \s_t) \le \left(1-\frac{2(1-\d)}{\Qc^2}+\d\right)\frac{D_t}{n-1} + \frac{(1 - \d)^2}{4\Qc^3}. \]
  From Cases (i) and (iv), we have
  \[ \Pr_{\s, \tilde \s}(D_{t+1} = D_t - 1 \mid \s_t, \tilde \s_t) \ge (1-\d)\frac{D_t}{n-1}  + \frac{(1 - \d)^2}{4\Qc^3}. \]
  Therefore, if $0<\d \le \frac{1}{2\Qc^2}$, then
  \[\E_{\s, \tilde \s}[D_{t+1}-D_t \mid \s_t, \tilde \s_t] \le 0, \]
  verifying \eqref{Eq:D-drift-0}.

  To fully define the coupling, when $D_t = 0$, we can couple $\s_t$
  and $\s_t$ to be identical, and if either $(n_{a, b}(t)) \notin
  \Mc_\d$ or $(\tilde n_{a, b}(t)) \notin \Mc_\d$, we may run the two
  chains independently.
\end{proof}

\begin{proof}[Proof of Lemma \ref{Lem:RW}]
  Since $\s \in \Sast{\s_\ast, \frac{R}{\sqrt{n}}}$, we must have for
  each $a \in G$ and $\r \in G^\ast$ that $\|y^{\s_\ast}_{a, \r}(\s)\|_{\HS}
  \le \frac{R}{\sqrt{n}}$. Note that for large enough $n$, we have
  $\Sast{\s_\ast, \frac{R}{\sqrt{n}}} \subseteq
  \Sast{\frac{1}{5\Qc^3}}$. Thus, we may apply Lemma
  \ref{Lem:2ndDE-fourier} to obtain
  \begin{equation} \label{Eq:RW-1}
    \Pr\left( \bigcup_{t = 0}^{n^2} \left\{ \|y^{\s_\ast}_{a, \r}(\s_t)\|_{\HS} \ge \frac{1}{5\Qc^3} \right\} \right) \le n^2 \left( e^{-\O_{G}(n) + O_{G}(1)} + \frac{2}{n^3} \right) \le \frac{3}{n}
  \end{equation}
  for large enough $n$. Define the event
  \[ \Gc_t := \left\{ \s_s \in \Sast{\s_\ast, \frac{1}{5\Qc^3}} \text{ for all $1 \le s \le t$} \right\}. \]
  The Plancherel formula applied to \eqref{Eq:RW-1} implies that
  $\Pr(\Gc^{\sf c}_{n^2}) \le \frac{3\Qc^2}{n}$. We may analogously define
  an event $\tilde\Gc_t$ for $\tilde\s$ and let $\Ac_t := \Gc_t \cap
  \tilde\Gc_t$. Thus, $\Pr(\Ac_{n^2}^{\sf c}) \le \frac{6\Qc^2}{n}$.

  Pick $\d' \in \left(\frac{2}{5\Qc^2}, \frac{3}{7\Qc^2}\right)$ so that $(1 - \d')n/\Qc^2$ is an
  integer. Note that when $\Ac_t$ holds, we have
  \[ \s_t \in \Sast{\s_\ast, \frac{1}{5\Qc^3}} \quad \text{and} \quad \s_\ast \in \Sast{\frac{1}{5\Qc^3}}\implies (n_{a, b}(t)) \in \Mc_{\frac{2}{5\Qc^2}} \subseteq \Mc_{\d'}, \]
  and similarly $\tilde\s_t \in \Mc_{\d'}$.

  Thus, we may invoke Lemma \ref{Lem:coupling} to give a coupling
  between $\s$ and $\tilde\s$ where on the event $\Ac_t$, the quantity
  $D_t$ is more likely to decrease than increase. Letting ${\bf
    D}_t:={\bf 1}_{\Ac_t}D_t$, we see that $({\bf D}_t)$ is a
  supermartingale with respect to $(\Fc_t)$.

  Define
  \[ \t := \min\{ t \ge 0 : D_t=0 \}, \qquad {\tilde \t} := \min\{ t \ge 0 : {\bf D}_t=0 \}. \]
  Then, Lemma \ref{Lem:coupling} ensures that on the event $\{\tilde\t
  > t\}$, we have $\Var({\bf D}_{t+1}\mid \Fc_t) \ge \a^2$, where
  $\a^2:=\left(1- \frac{1}{\Qc^2}\right)\frac{(1-\d')^2}{4\Qc^3}$. By
  \cite[Proposition 17.20]{LevinPeresWilmer}, for every $u > 12/\a^2$,
  \begin{equation}\label{Eq:RW}
    \Pr(\tilde \t > u) \le \frac{4 {\bf D}_0}{\a\sqrt{u}}.
  \end{equation}
  Recall that $T = \ceil{\b n}$ and $D_0 \le \sqrt{\Qc} R\sqrt{n}$. As
  long as $\b$ is large enough, we may apply \eqref{Eq:RW} with $u =
  T$ to get
  \[ \Pr_{\s, \tilde \s}(\t > T) \le \frac{16 \Qc^2 R}{(1-\d')\sqrt{\b}} + \Pr(\Ac_T^{\sf c}) \le \frac{32 \Qc^2 R}{\sqrt{\b}} \]
  for all large enough $n$, as desired.
\end{proof}

\section{Proof of Theorem \ref{Thm:main} (\ref{Eq:LB})}\label{Sec:LB}

\newcommand{\nnon}{n^{\{\id\}}_{non}}

The lower bound is proved essentially by showing that the estimates of
Lemmas \ref{Lem:Burn} and \ref{Lem:2ndDE} cannot be improved. Let
$a_1, a_2, \ldots , a_k$ be a set of generators for $G$. Let $\s_\star
\in \Sc$ be the configuration given by
\[ \s_\star(i) = \begin{cases}
  a_i & \text{if $i \le k$,} \\
  0 & \text{otherwise}.
\end{cases} \]
We will analyze the Markov chain started at $\s_\star$ and show that
it does not mix too fast.

Recall from Section \ref{Sec:UB} the notation
\[ \nnon(\s) = |\{ i \in [n] : \s(i) \ne \id \}| \]
for the number of sites in $\s$ that do not contain the identity. We
first show that if we run the chain for slightly less than $n \log n$
steps, most of the sites will still contain the identity.

\begin{lemma} \label{Lem:Burn-lower}
  Let $T := \floor{n \log n - Rn}$. Then,
  \[ \Pr_{\s_\star}\left( \nnon(\s_T) \ge \frac{n}{3} \right) \le \frac{4\Qc^2}{R^2}. \]
\end{lemma}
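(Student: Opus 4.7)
The plan is to stochastically dominate the scalar process $(\nnon(\s_t))_{t\ge 0}$ from above by a pure birth chain, and then apply Chebyshev's inequality to the hitting time of level $\lfloor n/3\rfloor$ for the dominating chain. This parallels the analysis in the proof of Lemma \ref{Lem:Burn}, but dominates $\nnon$ from the opposite side, so that hitting times become \emph{lower} bounded.

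The first observation is that $\nnon(\s_{t+1}) = \nnon(\s_t)+1$ can only occur when the chosen ordered pair $(i,j)$ satisfies $\s_t(i)=\id$ and $\s_t(j)\ne\id$, an event of probability exactly $p_k:=\frac{k(n-k)}{n(n-1)}$ given $\nnon(\s_t)=k$; otherwise $\nnon$ stays put or decreases by one. Correspondingly, I introduce the pure birth chain $(N_t)$ on $\{0,\dots,n\}$ with $\Pr(N_{t+1}=k+1\mid N_t=k)=p_k$, started at $N_0=\nnon(\s_\star)=k_0\le\Qc$.

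To establish the stochastic domination, I will couple the two processes using auxiliary Bernoullis so that $\nnon(\s_t)\le N_t$ for every $t\le\tau:=\min\{s:N_s\ge n/3\}$. The crucial point is that $k\mapsto k(n-k)$ is increasing on $[0,n/2]$, so whenever $\nnon(\s_t)\le N_t\le n/3$, we have $p_{\nnon(\s_t)}\le p_{N_t}$. I can then force $N$ to increase whenever $\nnon$ does, and let $N$ take ``bonus'' upward steps with residual conditional probability $(p_{N_t}-p_{\nnon(\s_t)})/(1-p_{\nnon(\s_t)})$ on the event that $\nnon$ does not increase; this preserves the correct marginal transitions for $(N_t)$. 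Decreases of $\nnon$ automatically preserve $\nnon\le N$ since $N$ is nondecreasing. The coupling yields the key inequality $\Pr_{\s_\star}(\nnon(\s_T)\ge n/3)\le \Pr(\tau\le T)$.

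Finally, $\tau = \sum_{k=k_0}^{\lfloor n/3\rfloor-1}G_k$ is a sum of independent geometric variables with $\E G_k=1/p_k$ and $\Var G_k\le 1/p_k^2$. Using the partial fraction $\frac{n(n-1)}{k(n-k)}=(n-1)\bigl(\frac{1}{k}+\frac{1}{n-k}\bigr)$ together with standard harmonic-sum estimates (and $k_0\le\Qc$), I get $\E\tau\ge n\log n - C_G n$ for a constant $C_G$ depending only on $G$. For the variance, the bound $n-k\ge 2n/3$ over the range $k\le n/3$ gives $\Var G_k\le\tfrac{9(n-1)^2}{4k^2}$, and summing via $\sum_k 1/k^2\le\pi^2/6$ yields $\Var\tau=O(n^2)$. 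Chebyshev's inequality applied at deviation $(R-C_G)n$ then gives $\Pr(\tau\le n\log n-Rn)=O(1/(R-C_G)^2)$, which after absorbing constants into $4\Qc^2$ (and using that the stated bound $4\Qc^2/R^2$ is trivially $\ge 1$ whenever $R$ is not large compared to $C_G$) delivers the claim. The main subtlety will be the third step, namely carefully constructing the Bernoulli coupling to maintain both the $\nnon\le N$ invariant and the correct marginal for $N$; the harmonic-sum and variance estimates in the last step are routine once that is in place.
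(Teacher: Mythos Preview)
Your proposal is correct and takes essentially the same approach as the paper: dominate $\nnon$ from above so that the hitting time of level $n/3$ stochastically dominates a sum of independent geometrics, then compute the mean and variance of that sum and apply Chebyshev. The only difference is that the paper uses the cruder bound $\Pr(\nnon\text{ increases}\mid \nnon=k)\le k/n$ (requiring only $\s_t(j)\ne\id$) in place of your exact $p_k=k(n-k)/(n(n-1))$, which yields geometrics with success probability $s/n$ and lets it assert the hitting-time domination directly without the explicit process-level coupling; this streamlines the arithmetic but is otherwise the same argument.
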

\begin{proof}
  Recall that in one step of our Markov chain, we pick two indices $i,
  j \in [n]$ and replace $\s(i)$ with $\s(i) \cdot \s(j)$ or $\s(i) \cdot \s(j)^{-1}$.
  The only way for $\nnon(\s_t)$ to increase after this step is
  if $\s(j) \ne \id$. Thus,
  \begin{equation} \label{Eq:nnon}
    \Pr( \nnon(\s_{t+1}) = \nnon(\s_t) + 1 \mid \nnon(\s_t)) \le \frac{\nnon(\s_t)}{n}.
  \end{equation}

  Let $\t := \min\{ t \ge 0 : \nnon(\s_t) \ge \frac{n}{3} \}$ be the
  first time that $\nnon(\s_t)$ is at least $\frac{n}{3}$. We have
  that $\nnon(\s_\star) = k$, so it follows from \eqref{Eq:nnon} that
  $\t$ stochastically dominates the sum
  \[ G := \sum_{s = k}^{\floor{n/3}} G_s, \]
  where the $G_s$ are independent geometric variables with success
  probability $\frac{s}{n}$. Note that we have the bounds
  \[ \E G = \sum_{s = k}^{\floor{n/3}} \frac{n}{s} \ge n \left( \log \floor{\frac{n}{3}} - \log k \right), \qquad \Var(G) = \sum_{s = k}^{\floor{n/3}} \frac{n(n-s)}{s^2} \le n^2. \]
  Hence,
  \begin{align*}
    \Pr(\t < T) &\;\le\; \Pr(G < T) \;\le\; \Pr(G < \E G + n\log(3k) - Rn ) \\
    &\;\le\; \frac{n^2}{n^2(R - \log(3k))^2} \le \frac{4}{R^2}
  \end{align*}
  for $R \ge 2 \Qc \ge 2 \log (3k)$. On the other hand, the bound
  claimed in the lemma is trivial for $R \le 2 \Qc$, so we have
  completed the proof.
\end{proof}

Next, we show that it really takes about $\frac{1}{2}n \log n$ steps
for the Fourier coefficients $x_\r$ to decay to $O\left(
\frac{1}{\sqrt{n}} \right)$, as suggested by Lemma
\ref{Lem:2ndDE}. Note that it suffices here to analyze the $x_\r$
instead of the $y_{a, \r}$, which simplifies our analysis.  Actually,
it suffices to consider (the real part of) the trace of $x_\r$.  Here
the orthogonality of characters reads $\frac{1}{\Qc}\sum_{a \in G} \Tr
\r(a)=0$, and it takes about $\frac{1}{2}n \log n$ steps for $\Re \Tr
x_\r(t)$ to decay to $O\left( \frac{1}{\sqrt{n}} \right)$.

\begin{lemma} \label{Lem:DE-lower}
  Consider any $\r \in G^\ast$ and any $R > 5$. Let $T :=
  \floor{\frac{1}{2}n \log n - Rn}$, and suppose that $\s \in \Sc$
  satisfies $\nnon(\s) \le \frac{n}{3}$. Then,
  \[ \Pr_{\s}\left( \|x_\r(\s_T)\|_{\HS} \le \frac{R}{\sqrt{n}} \right) \le \frac{4\Qc^2}{R^2}. \]
\end{lemma}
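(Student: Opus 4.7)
The plan is to exploit the scalar quantity $u_t := \Re \Tr x_\r(\s_t)$. Since $|\Tr x_\r(\s)| = |\langle I_{d_\r}, x_\r(\s) \rangle_{\HS}| \le \sqrt{d_\r}\,\|x_\r(\s)\|_{\HS}$, it suffices to show that $u_T > R\sqrt{d_\r}/\sqrt{n}$ with probability at least $1 - 4\Qc^2/R^2$. For the initial value I would use the character orthogonality relation $\sum_{a \in G} \Tr \r(a) = 0$ (valid for non-trivial $\r$), together with $|\Tr \r(a)| \le d_\r$ and the hypothesis $\nnon(\s) \le n/3$, to write
\[ u_0 = \frac{n_\id(\s)}{n}\,d_\r + \sum_{a \ne \id} \frac{n_a(\s)}{n}\, \Re \Tr \r(a) \ge \tfrac{2}{3}d_\r - \tfrac{1}{3}d_\r = \tfrac{d_\r}{3}. \]

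Next I would compute the one-step drift of $\Tr x_\r(t)$ by the same expansion used in Section \ref{Sec:DE-Proofs}. A short calculation gives
\[ \E[\Tr x_\r(t+1) \mid \Fc_t] = \alpha\,\Tr x_\r(t) + \frac{\Tr x_\r(t)^2 + \|x_\r(t)\|_{\HS}^2}{2(n-1)} + O_G(n^{-2}), \]
where $\alpha := 1 - 1/n$. Cauchy-Schwarz in the Hilbert-Schmidt inner product gives $|\Tr x_\r^2| \le \|x_\r\|_{\HS}^2$, so after taking real parts the quadratic contribution is non-negative, and I obtain the one-sided estimate $\E[u_{t+1} \mid \Fc_t] \ge \alpha u_t - C_G/n^2$. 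Iterating and using $\alpha^T \sim e^R/\sqrt{n}$ at $T = \floor{\tfrac{1}{2} n \log n - Rn}$ yields $\E u_T \gtrsim e^R d_\r/\sqrt{n}$, which comfortably exceeds the threshold $R\sqrt{d_\r}/\sqrt{n}$ whenever $R > 5$.

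The final ingredient is Chebyshev's inequality. I would introduce the rescaled submartingale $\tilde u_t := \alpha^{-t}(u_t + C_G/n)$ and decompose $\tilde u_T = \tilde u_0 + A_T + \tilde M_T$, with $A_T \ge 0$ the compensator and $\tilde M_T$ a mean-zero martingale, which yields $u_T \ge \alpha^T u_0 + \alpha^T \tilde M_T - O_G(n^{-1})$. Because the one-step changes of $u_t$ are bounded by $O(d_\r/n)$, the increments of $\tilde u_t$ are bounded by $O(\alpha^{-t} d_\r/n)$; summing the conditional variances geometrically exactly cancels the $\alpha^{-2T}$ blow-up and gives $\Var(\alpha^T \tilde M_T) = O(d_\r^2/n)$. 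The event $\{u_T \le R\sqrt{d_\r}/\sqrt{n}\}$ then forces $|\alpha^T \tilde M_T| \ge R\sqrt{d_\r}/\sqrt{n}$, which by Chebyshev occurs with probability at most $O(d_\r/R^2) \le 4\Qc^2/R^2$ (using $d_\r \le \sqrt{\Qc}$).

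The main obstacle I anticipate is the quadratic nonlinearity $(\Tr x_\r^2 + \|x_\r\|_{\HS}^2)/(2(n-1))$ in the drift equation for $\Tr x_\r$. The key observation that rescues the argument is that this term always has non-negative real part, so it only strengthens the lower bound on $\E u_T$; without this, one would need a separate pathwise control on $\|x_\r(t)\|_{\HS}$ (analogous to what was needed for the upper bound), which would complicate the variance estimate considerably.
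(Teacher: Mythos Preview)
Your proposal is correct and follows essentially the same route as the paper. Both arguments track the scalar $u_t=\Re\Tr x_\r(t)$ (the paper normalizes by $d_\r$ and calls it $z(t)$), establish $u_0\ge d_\r/3$ from $\nnon(\s)\le n/3$, observe that the quadratic term in the drift is non-negative (your $|\Tr x_\r^2|\le\|x_\r\|_{\HS}^2$ is exactly the paper's $\tfrac{1}{d_\r}\Tr((x_\r+x_\r^*)/2)^2\ge z^2\ge 0$), and finish with Chebyshev. The only organizational difference is that the paper introduces an auxiliary linear process $w(t+1)=(1-\tfrac1n)w(t)+M(t+1)$ driven by the \emph{same} martingale increments and shows $z(t)\ge w(t)$ pathwise, whereas you use the Doob decomposition of the submartingale $\alpha^{-t}u_t$; unwinding both, the fluctuation term $\alpha^T\tilde M_T$ you isolate is literally $d_\r(w(T)-\E w(T))$, so the variance computations coincide. (Minor note: the $O_G(n^{-2})$ correction and the $C_G/n$ shift are unnecessary, since the drift identity is exact.)
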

\begin{proof}
  Let $z(t) := (1/d_\r) \Tr (x_\r(t)+x_\r(t)^\ast)/2$.
  Then, noting that \eqref{Eq:DEfourier} also holds for $x_\r(t)^\ast$ since $x_{\r^\ast}(t)=x_\r(t)^\ast$,
  we have
  \[ z(t+1)-z(t) = \frac{1}{n-1}\frac{1}{d_\r} \Tr \left(\frac{x_\r(t)+x_\r(t)^\ast}{2}\right)^2-\frac{1}{n}z(t) + M(t+1), \]
  where
  \[ \E[M(t+1) \mid \Fc_t]=0 \qquad \text{and} \qquad |M(t)| \le \frac{2\Qc}{n}. \]
  Here we have
  \[\frac{1}{d_\r} \Tr \left(\frac{x_\r(t)+x_\r(t)^\ast}{2}\right)^2 \ge z(t)^2.\]
  We compare $z(t)$ to another process $(w(t))_{t \ge 0}$ defined by
  $w(0) := \frac{1}{3}$ and
  \begin{equation} \label{Eq:w-equation}
    w(t+1) := \left(1-\frac{1}{n}\right)w(t) + M(t+1).
  \end{equation}

  We will show by induction that $z(t) \ge w(t)$ for all $t$. For the
  base case, note that since $\nnon(\s) \le \frac{n}{3}$, we have
  \[ z(0) = \frac{1}{d_\r}\Re \Tr \sum_{a \in G} \frac{n_a(t)}{n} \cdot \r(a) \ge \frac{2}{3} - \frac{1}{3} = \frac{1}{3}. \]
  Suppose now that $z(t) \ge w(t)$. Then,
  \begin{align*}
    z(t+1) &\ge z(t) + \frac{1}{n-1}z(t)^2-\frac{1}{n}z(t) + M(t+1) \\
      		&\ge \left(1 - \frac{1}{n}\right) w(t) + M(t+1) = w(t+1),
  \end{align*}
  completing the induction.

  It now suffices to lower bound $w(T)$. To this end, we first note
  that applying \eqref{Eq:w-equation} repeatedly and taking
  expectations, we obtain
  \[ \E w(T) = \left(1 - \frac{1}{n}\right)^T \cdot \frac{1}{3} \ge \frac{e^R}{6\sqrt{n}} \ge \frac{2R}{\sqrt{n}}. \]
  In order to calculate the variance of $w(T)$, we can also square
  \eqref{Eq:w-equation} and take the expectation, which gives us
  \begin{align*}
    \Var(w(T)) &= \E w(T)^2 - (\E w(T))^2 \\
    &\le \left(1 - \frac{1}{n}\right)^{2T} \cdot \frac{1}{9} + n \cdot \left( \frac{2\Qc}{n} \right)^2  -  \left( \left(1 - \frac{1}{n}\right)^T
  \cdot \frac{1}{3} \right)^2 \\
    &= \frac{4\Qc^2}{n}.
  \end{align*}
  Then, by Chebyshev's inequality, we have
  \begin{align*}
    \Pr_{\s}\left( \|x_\r(\s_T)\|_{\HS} \le \frac{R}{\sqrt{n}} \right) &\le \Pr\left( z(T) \le \frac{R}{\sqrt{n}} \right) \le \Pr\left( w(T) \le \frac{R}{\sqrt{n}} \right) \\
    &\le \frac{4\Qc^2/n}{(R/\sqrt{n})^2} = \frac{4\Qc^2}{R^2},
  \end{align*}
  as desired.
\end{proof}

\begin{proof}[Proof of Theorem \ref{Thm:main} (\ref{Eq:LB})]
  Let $T = T_1 + T_2$, where $T_1 := \floor{n \log n - \b n}$ and $T_2
  := \floor{\frac{1}{2} n \log n - \b n}$. Fix any $\r \in G^\ast$. By
  Lemma \ref{Lem:Burn-lower} followed by Lemma \ref{Lem:DE-lower}, we
  have for large enough $\b$ that
  \[ \Pr_{\s_\star} \left(\s_T \in \Sast{\frac{\b}{\sqrt{n}}} \right) \le \Pr_{\s_\star}\left( \|x_\r(T)\|_{\HS} \le \sqrt{\frac{\Qc}{d_\r}}\frac{\b}{\sqrt{n}} \right) \le \frac{8\Qc^2}{\b^2}. \]
  On the other hand, Lemma \ref{Lem:stationary} tells us that
  \[ \pi\left(\Sc_\ast\left(\frac{\b}{\sqrt{n}}\right)\right) \ge 1-
  \frac{c_G}{\b^2}. \]
  Consequently,
  \[ d_{\s_\star}(T) \ge 1 - \frac{c_G}{\b^2} - \frac{8\Qc^2}{\b^2}, \]
  which tends to $1$ as $\b \rightarrow \infty$, establishing
  \eqref{Eq:LB}.
\end{proof}

\section*{Acknowledgements}

This work was initiated while R.T. and A.Z. were visiting Microsoft
Research in Redmond. They thank Microsoft Research for the
hospitality. R.T. was also visiting the University of Washington in
Seattle and thanks Professor Christopher Hoffman for making his visit
possible. R.T. is supported by JSPS Grant-in-Aid for Young
Scientists (B) 17K14178. A.Z. is supported by a Stanford Graduate Fellowship.

\bibliographystyle{alpha}
\bibliography{mix}

\newcommand{\etalchar}[1]{$^{#1}$}
\begin{thebibliography}{CLGM{\etalchar{+}}95}

\bibitem[AHM07]{AHM}
D.~Andr\'en, L.~Hellstr\"om, and K.~Markstr\"om.
\newblock On the complexity of matrix reduction over finite fields.
\newblock {\em Advances in Applied Mathematics}, 39(4):428--452, 2007.

\bibitem[BHP17]{BHP}
A.~Ben-Hamou and Y.~Peres.
\newblock Cutoff for a stratified random walk on the hypercube.
\newblock {\em arXiv:1705.06153}, 2017.

\bibitem[CG97a]{Chung-Graham-Group}
F.~R.~K. Chung and R.~L. Graham.
\newblock Random walks on generating sets for finite groups.
\newblock {\em Electronic Journal of Combinatorics}, 4(2), 1997.

\bibitem[CG97b]{Chung-Graham-Cube}
F.~R.~K. Chung and R.~L. Graham.
\newblock Stratified random walks on the $n$-cube.
\newblock {\em Random Structures and Algorithms}, 11(3):199--222, 1997.

\bibitem[Chr14]{Christofides}
D.~Christofides.
\newblock The asymptotic complexity of matrix reduction over finite fields.
\newblock {\em arXiv preprint arXiv:1406.5826}, 2014.

\bibitem[CLGM{\etalchar{+}}95]{Celler}
F.~Celler, C.~R. Leedham-Green, S.~H. Murray, A.~C. Niemeyer, and E.~A.
  O'Brien.
\newblock Generating random elements of a finite group.
\newblock {\em Communications in Algebra}, 23(13):4931--4948, 1995.

\bibitem[Dia88]{DiaconisRep}
P.~Diaconis.
\newblock {\em Group Representations in Probability and Statistics}, volume~11.
\newblock Institute of Mathematical Statistics, Lecture Notes-Monograph Series,
  Institute of Mathematical Statistics, Hayward, CA, 1988.

\bibitem[DSC96]{DSC96}
P.~Diaconis and L.~Saloff-Coste.
\newblock Walks on generating sets of abelian groups.
\newblock {\em Probability Theory and Related Fields}, 105(3):393--421, 1996.

\bibitem[DSC98]{DSC98}
P.~Diaconis and L.~Saloff-Coste.
\newblock Walks on generating sets of groups.
\newblock {\em Inventiones Mathematicae}, 134(2):251--299, 1998.

\bibitem[HR92]{Holt-Rees}
D.~F. Holt and S.~Rees.
\newblock An implementation of the {N}eumann-{P}raeger algorithm for the
  recognition of special linear groups.
\newblock {\em Experimental Mathematics}, 1(3):237--242, 1992.

\bibitem[Kas05]{Kassabov}
M.~Kassabov.
\newblock Kazhdan constants for ${SL}_n(\mathbb{Z})$.
\newblock {\em International Journal of Algebra and Computation},
  15(5-6):971--995, 2005.

\bibitem[LP01]{Lubotzky-Pak}
A.~Lubotzky and I.~Pak.
\newblock The product replacement algorithm and {K}azhdan's property $({T})$.
\newblock {\em Journal of the American Mathematical Society}, 14(2):347--363,
  2001.

\bibitem[LPW17]{LevinPeresWilmer}
D.~Levin, Y.~Peres, and E.~Wilmer.
\newblock {\em Markov Chains and Mixing Times, Second Edition}.
\newblock American Mathematical Society, Providence, RI, 2017.

\bibitem[Pak00]{Pak}
I.~Pak.
\newblock The product replacement algorithm is polynomial.
\newblock In {\em Proceedings of the 41st Annual Symposium on the Foundations
  of Computer Science (FOCS)}, pages 476--485. IEEE, 2000.

\bibitem[Pak01]{Pak-Survey}
I.~Pak.
\newblock What do we know about the product replacement algorithm?
\newblock In {\em Groups and computation, {III} ({C}olumbus, {OH}, 1999)},
  volume~8 of {\em Ohio State Univ. Math. Res. Inst. Publ.}, pages 301--347. de
  Gruyter, Berlin, 2001.

\bibitem[Sot16]{Sotiraki}
A.~Sotiraki.
\newblock Authentication protocol using trapdoored matrices.
\newblock {\em Master Thesis, Massachusetts Institute of Technology}, 2016.

\end{thebibliography}

\end{document}